\newcommand{\cmark}{{\color[HTML]{1ac938}\ding{51}}}%
\newcommand{\xmark}{{\color[HTML]{e8000b}\ding{55}}}%
\newcommand{\qedsymbol}{\hfill \ensuremath{\square}}
\newtheorem{theorem}{Theorem}
\newcommand{\theoremnewline}{\makebox[0pt][l]{}\\}
\renewenvironment{proof}{{\noindent \bfseries Proof:\newline}}{\newline}
\renewcommand{\subsubsection}[1]{\vspace{0.5\baselineskip}\noindent{\bf #1.}}
\renewcommand{\paragraph}[1]{\vspace{0.5\baselineskip}\noindent\textit{#1.}}
\title{A Comparison of Sparse Solvers for Severely Ill-Conditioned Linear Systems in Geophysical Marker-In-Cell Simulations}
\name{\shortstack{\textit{Marcel Ferrari} \\ mferrari@ethz.ch}}
\address{Faculty of Computational Science and Engineering, \\ Department of Mathematics, ETH Z\"urich\\Z\"urich, Switzerland}
\begin{document}

\maketitle

\begin{abstract}
Solving sparse linear systems is a critical challenge in many scientific and engineering fields, particularly when these systems are severely ill-conditioned. This work aims to provide a comprehensive comparison of various solvers designed for such problems, offering valuable insights and guidance for domain scientists and researchers. We develop the tools required to accurately evaluate the performance and correctness of 16 solvers from 11 state-of-the-art numerical libraries, focusing on their effectiveness in handling ill-conditioned matrices. The solvers were tested on linear systems arising from a coupled hydro-mechanical marker-in-cell geophysical simulation. To address the challenge of computing accurate error bounds on the solution, we introduce the Projected Adam method, a novel algorithm to efficiently compute the $\mathcal{L}_2$ condition number of a matrix without relying on eigenvalues or singular values. Our benchmark results demonstrate that Intel oneAPI MKL PARDISO, UMFPACK, and MUMPS are the most reliable solvers for the tested scenarios. This work serves as a resource for selecting appropriate solvers, understanding the impact of condition numbers, and improving the robustness of numerical solutions in practical applications.
\end{abstract}

\section{Introduction}\label{sec:intro}
Solving sparse linear systems (SLSs) is a crucial step in many scientific and engineering problems, particularly in the numerical solution of partial differential equations (PDEs). These systems arise naturally during the discretization process~\cite{pde_discretization_sparse_ls}, especially when addressing systems with multiple non-linearly interacting physical phenomena. For example, in computational fluid dynamics, they are essential for simulating fluid flow~\cite{fluid_fem_sparse_ls, fast_wave_sparse_ls} and turbulence~\cite{fem_fluid_1,fem_fluid_2,fem_fluid_3}. Similarly, in structural engineering, SLSs must be solved to model stress and strain distributions in complex structures~\cite{struct_fem_book, applications_fem_struct_eng}. In geophysics, understanding properties such as pressure, fluid flow, heat transfer, and mechanical deformation often involves solving large systems of equations~\cite{zilio_hydro_mechanical, zilio_subduction_earthquake}.

Problems arising from physics frequently result in severely ill-conditioned sparse matrices with high condition numbers, which significantly impact solver performance and accuracy. This issue is particularly prevalent in geophysics, where a linear system must encode physical properties of vastly different magnitudes. Consider, for example, the difference in magnitude between the velocities of moving plates, which typically range in the order of \mbox{$2-8 \ \operatorname{cm/yr} \approx 6\times 10^{-10} - 2.5\times 10^{-9} \ \operatorname{m/s}$}~\cite{plate_speed}, versus the internal deviatoric stresses, which normally range in the order of \mbox{$200-300 \ \operatorname{bar} = 2\times10^7 - 3 \times 10^7 \ \operatorname{N/m^2}$} but can exceed $1 \operatorname{kbar} = 10^8 \operatorname{N/m^2}$, such as in the volcanic regions of Hawaii~\cite{stress_magnitude}. These severely ill-conditioned SLSs can exhibit condition numbers greater than $10^{25}$, presenting a significant challenge even for robust and well established numerical libraries. 
This work aims to assess and compare the accuracy and performance of various high-performance, state-of-the-art sparse solvers. Using the 2D seismo-hydro-mechanical simulation developed in~\cite{Gerya2024} as a benchmark, we identify the solvers that handle ill-conditioned systems most efficiently. The evaluation includes both iterative and direct methods, optimized for both x86 CPUs and Nvidia GPU architectures. In addition to benchmarking the solvers, this work also develops the tools required to properly evaluate the correctness and accuracy of solutions – a task that is not widely recognized but is critical in the development of numerical code for scientific applications. We assess the capabilities and limitations of each solver and provide a comprehensive analysis in terms of accuracy and performance of each implementation. Moreover, this work provides a comprehensive list of available numerical libraries with extensive references, serving as a valuable resource to guide researchers and domain scientists facing comparable problems. The results highlight the key strengths and weaknesses of each method and offer a valuable starting point when optimizing similar workloads.

\section{Related Work}
Given the wide applications of sparse linear solvers, studies comparing their performance and accuracy are proposed periodically. Recent work in this area includes a study on HPC solvers for sparse linear systems with applications to PDEs, which discusses how the numerical solver is affected by the regularity of the discretization of the domain, the discretization of the input PDE and the choice of different initial conditions~\cite{hpc_solvers_comparison}. Another recent study focuses specifically on direct solvers applied to similarly ill-conditioned electromagnetic geophysical problems~\cite{evaluation_parallel_solvers}. Motivated by the need to explore a broader range of methods and algorithms, our work aims to provide a comprehensive comparison of solvers with a focus on ill-conditioned geophysical problems.

\section{Types of Sparse Linear System Solvers}
Over the years, many strategies have been developed for the efficient solution of sparse linear systems. Despite this, it is possible to divide them broadly into two distinct categories: iterative solvers and direct solvers. This section discusses the characteristics and properties of these two families of methods at a high level. The goal is to provide a broad overview of the techniques currently employed in state-of-the-art numerical libraries in order to better underscore the differences among the evaluated solvers. For further details on these techniques, please refer to the resources cited in the upcoming sections.

\subsection{Iterative Solvers}
Iterative solvers are methods that approach the solution of a linear system
\begin{equation}
\mathbf{A}\mathbf{x} = \mathbf{b}
\end{equation}
by gradually improving an initial guess through successive approximations. The core idea is to start with an initial guess $\mathbf{x}_0$, normally chosen at random or set to $\mathbf{0}$, and produce a sequence of approximations
\begin{equation}
\mathbf{x}_0, \mathbf{x}_1, \mathbf{x}_2, \mathbf{x}_3, \ldots
\end{equation}
that converge to the true solution $\mathbf{x}$. Each iteration involves applying an operator $\mathbf{\Phi}$ that moves the current approximation closer to the solution. This means that the previous sequence is equivalent to:
\begin{equation}
\mathbf{x}_0, \mathbf{\Phi x}_0, \mathbf{\Phi}^2 \mathbf{x}_0, \mathbf{\Phi}^3 \mathbf{x}_0, \ldots
\end{equation}
with $\mathbf{\Phi}$ chosen such that:
\begin{equation}
\lim_{n \rightarrow \infty} \mathbf{\Phi}^n \mathbf{x}_0 = \mathbf{x}
\end{equation}
Common iterative methods include the Jacobi method, Gauss-Seidel method, and Krylov subspace methods such as Conjugate Gradient (CG) and Generalized Minimal Residual (GMRES). An in-depth discussion of the most widely used iterative methods for sparse linear systems is available in~\cite{yousef_saad_iterative_methods_linear_systems}.

\subsubsection{Complexity}
The complexity of iterative solvers depends on the number of iterations required for convergence and the cost per iteration. The operator $\mathbf{\Phi}$ usually leverages matrix vector products in order to produce the next entry in the sequence. For dense systems, this has a complexity of $O(n^2)$, while for large sparse systems the cost per iteration depends on the number of non-zero entries $\operatorname{nnz}$ and is typically $O(\operatorname{nnz})$. The total computational cost is $O(k\cdot \operatorname{nzz})$, where $k$ is the number of iterations needed to achieve the desired accuracy.

\subsubsection{Convergence and Stability}
Convergence and stability of iterative solvers depend on the chosen method and the structural and numerical properties of the matrix $\mathbf{A}$. Properties such as symmetry and positive-definiteness can improve both stability and convergence speed. On the other hand, convergence can be slow if the matrix $\mathbf{A}$ is ill-conditioned and in extreme cases the algorithm can even diverge.

\subsubsection{Preconditioning}
Preconditioning is a technique used to improve the convergence rate of iterative solvers by transforming the linear system into an equivalent one with more favorable properties. That is, compute $\mathbf{P}$ and/or $\mathbf{Q}$ such that the \textbf{left}-preconditioned system
\begin{equation}
\mathbf{P}^{-1}\mathbf{A}\mathbf{x} = \mathbf{P}^{-1}\mathbf{b},
\end{equation}
the \textbf{right}-preconditioned system
\begin{equation}
\mathbf{A}\mathbf{Q}^{-1}(\mathbf{Q} \mathbf{x}) = \mathbf{b},
\end{equation}
or the \textbf{left}-\textbf{right}-preconditioned system
\begin{equation}
\mathbf{P}^{-1}\mathbf{A}\mathbf{Q}^{-1}(\mathbf{Q} \mathbf{x}) = \mathbf{P}^{-1}\mathbf{b}
\end{equation}
has better spectral properties, leading to faster convergence.

\subsubsection{Diagonal Preconditioning} 
Diagonal preconditioning involves scaling the matrix $\mathbf{A}$ by its diagonal elements. This is a cheap and simple preconditioning strategy which can be surprisingly effective, especially for diagonally dominant sparse systems~\cite{diag_preconditioning}.

\subsubsection{Incomplete LU Preconditioning}
Incomplete LU (ILU) preconditioning approximates $\mathbf{A}$ by factors $\mathbf{L}$ and $\mathbf{U}$, where $\mathbf{A} \approx \mathbf{LU}$, but with some entries in $\mathbf{L}$ and $\mathbf{U}$ discarded to maintain sparsity. Solving the system with this approximate factorization yields an approximate, yet still incorrect, solution so the factorization is instead used as a preconditioner for an iterative algorithm by setting $\mathbf{P/Q} = \mathbf{LU}$. ILU can significantly accelerate convergence for a wide range of problems, though it requires additional computational effort to construct the preconditioner
\cite{incomplete_LU_preconditioner}.

\subsubsection{Strengths}
Iterative methods have several advantages. They are scalable due to their reliance on simple matrix-vector products, which can be parallelized effectively. They are flexible, easily adaptable to different problems, and can incorporate various preconditioning techniques. Additionally, iterative solvers have lower memory requirements, as they usually only need to store a few vectors of the matrix size and also allow for matrix-free representations of the system, making them suitable for very large problems.

\subsubsection{Weaknesses}
Iterative solvers also have some weaknesses. They may converge slowly or fail to converge for ill-conditioned systems, such as the ones benchmarked in this work. Their performance and convergence are heavily dependent on the choice of preconditioner, making them sensitive to preconditioning.

\subsection{Direct Solvers}
Direct solvers operate aim to solve the linear system \begin{equation}
\mathbf{A}\mathbf{x} = \mathbf{b}
\end{equation}
through a finite sequence of operations, typically involving matrix factorizations. The core idea is to factorize the matrix $\mathbf{A}$ into products of simpler matrices, solve the resulting simpler systems, and then combine the solutions. Direct solvers guarantee a solution in a finite number of steps, assuming no numerical breakdown occurs. Common factorizations include Gaussian elimination ( also known as right-looking LU factorization), LU factorization, and QR factorization. An in-depth discussion of the most
widely used direct methods for sparse linear systems is
available in~\cite{tim_davis_direct_methods_linear_systems, survey_direct_solvers}.

\subsubsection{LU Factorization}
LU factorization decomposes the matrix $\mathbf{A}$ into a lower triangular matrix $\mathbf{L}$ and an upper triangular matrix $\mathbf{U}$ such that:
\begin{equation}
\mathbf{A} = \mathbf{LU}
\end{equation}
Once $\mathbf{L}$ and $\mathbf{U}$ are determined, the system $\mathbf{A}\mathbf{x} = \mathbf{b}$ is solved in two steps using forward and backward substitution:

\begin{enumerate}
    \item \textit{Forward Substitution} \\
    Solve the intermediate system $\mathbf{L}\mathbf{y} = \mathbf{b}$ for $\mathbf{y}$
    
    \item \textit{Backward Substitution} \\
    Solve the system $\mathbf{U}\mathbf{x} = \mathbf{y}$ for the solution vector $\mathbf{x}$.
\end{enumerate}
This two-step process efficiently leverages the triangular structure of $\mathbf{L}$ and $\mathbf{U}$, making the solution of the system computationally straightforward once the factorization is complete.

\subsubsection{QR Factorization}
QR factorization decomposes $\mathbf{A}$ into an orthogonal matrix $\mathbf{Q}$ and an upper triangular matrix $\mathbf{R}$ such that: 
\begin{equation}
\mathbf{A} = \mathbf{QR}
\end{equation}
Since $\mathbf{Q}$ is orthogonal, we can then transform the system to upper triangular form at the cost of a matrix-vector product:
\begin{equation}
\mathbf{R} = \mathbf{Q}^\top \mathbf{b}
\end{equation}
In general, QR factorization tends to be more expensive than LU factorization due to the added complexity of finding an orthogonal matrix $\mathbf{Q}$. However, this method is still particularly useful as it enables the solution of linear least squares problems for matrices that are not square.

\subsubsection{Symbolic and Numerical Factorizations}
The factorization process is divided into two key phases: symbolic and numerical factorization.
Symbolic factorization aims to identify the sparsity structure of the factor matrices (e.g., $\mathbf{L}$ and $\mathbf{U}$ in LU factorization) without considering the numerical values. It determines where non-zero elements will appear in the factors in order to minimize memory usage and computational cost. During this phase, a fill-in (the new non-zero entries that emerge during factorization) minimizing permutation of the matrix is computed using reordering algorithms like Minimum Degree or Nested Dissection. Numerical factorization uses the structure from the symbolic phase to generate the actual values of the factor matrices. Since the sparsity pattern is already known, the solver can efficiently perform the arithmetic operations required to obtain the factors. This phase is computationally intensive, and its efficiency depends on a multitude of factors, including the effectiveness of the symbolic factorization in minimizing fill-in. For systems with identical sparsity pattern, the symbolic phase only needs to be performed once, allowing repeated numerical factorizations at a lower computational cost.

\subsubsection{Row and Column Scaling}
Row and column scaling are preconditioning techniques often employed with direct solvers to improve the conditioning of a matrix before attempting a factorization. The goal is to reduce the condition number by scaling all rows/column to have unit norm, which can enhance the numerical stability of the factorization process and the accuracy of the solution.

\paragraph{Row Scaling}
Row scaling is achieved via left preconditioning by  setting $\mathbf{P} = \mathbf{D}_\mathrm{row}$, where $\mathbf{D}_\mathrm{row}$ is a diagonal matrix containing the norms of the rows of $\mathbf{A}$. The original system $\mathbf{A}\mathbf{x} = \mathbf{b}$ is then transformed into the preconditioned system:
\begin{equation}
(\mathbf{D}_\mathrm{row}^{-1} \mathbf{A}) \mathbf{x} = \mathbf{D}_\mathrm{row}^{-1} \mathbf{b}
\end{equation}
This transformation balances the influence of each row and is particularly beneficial when $\mathbf{A}$ has imbalanced row norms.

\paragraph{Column Scaling}
Column scaling is achieved via right preconditioning by setting $\mathbf{Q} = \mathbf{D}_\mathrm{col}$, where $\mathbf{D}_\mathrm{col}$ is a diagonal matrix containing the norms of the columns of $\mathbf{A}$. The original system is transformed as follows:
\begin{equation}
(\mathbf{A} \mathbf{D}_\mathrm{col}^{-1}) (\mathbf{D}_\mathrm{col} \mathbf{x}) = \mathbf{b}
\end{equation}
This can be solved in two steps:
\begin{enumerate}
    \item Solve the intermediate system $(\mathbf{A} \mathbf{D}_\mathrm{col}^{-1}) \mathbf{y} = \mathbf{b}$ for $\mathbf{y}$.
    \item Recover the solution $\mathbf{x}$ by computing $\mathbf{x} = \mathbf{D}_\mathrm{col}^{-1} \mathbf{y}$.
\end{enumerate}
This transformation balances the influence of each column and is particularly beneficial when $\mathbf{A}$ has imbalanced column norms.

\subsubsection{Complexity}
The computational complexity of direct solvers depends on the factorization method. For sparse matrices, the complexity is typically between $O(n^{1.5})$ and $O(n^2)$~\cite{sparse_QR_LU_complexity}, where $n$ is the dimension of the matrix. The exact complexity can vary based on the sparsity pattern and fill-in during the factorization process.

\subsubsection{Convergence and Stability}
Direct solvers are generally more robust than iterative solvers and provide exact solutions (subject to numerical precision). They are less sensitive to the conditioning of the matrix $\mathbf{A}$. However, they can suffer from numerical instability due to round-off errors, especially in ill-conditioned systems. Techniques like pivoting in LU factorization can help mitigate these issues.

\subsubsection{Strengths}
Direct solvers have notable advantages. They are robust, being less sensitive to the conditioning of the matrix and providing exact solutions up to numerical precision. Additionally, they are versatile, effective for a wide range of problems, including those with multiple right-hand sides or requiring factorizations to be reused.

\subsubsection{Weaknesses}
Direct solvers also have some disadvantages. The factorization process can be computationally expensive, particularly for large sparse matrices with significant fill-in. Furthermore, while parallel implementations of direct solvers exist, achieving efficient parallel performance is more challenging compared to iterative methods. Finally, they require higher memory usage since they need storage for the entire matrix and its factors, which can be prohibitive for very large systems.

\subsection{Overview of the Evaluated Libraries}
Table~\ref{tb:solver_overview} offers an overview of the numerical libraries and algorithms that were considered in this work. Both iterative and direct solvers were tested to provide a comprehensive evaluation of their performance and accuracy. We included libraries that support serial computation as well as those that utilize parallel computing through BLAS/LAPACK, OpenMP, and MPI. Our evaluation covered both CPU and GPU libraries to assess their capabilities on different hardware architectures. For iterative solvers, we focused on methods suitable for unsymmetric systems. In the case of direct solvers, we primarily targeted LU-based methods, as we were not interested in solving overdetermined linear systems. In total, 6 different numerical libraries were evaluated, for a total of 13 different methods.

\subsubsection{A Note on Iterative Solvers}
The evaluated iterative solvers were all implemented in the Eigen Library. The data indicates that for such ill-posed problems, no iterative method produced a correct solution. Although similar algorithms are available in other libraries, we did not perform any further testing due to these extremely poor accuracy results. Nevertheless, it should be noted that other libraries exist and may offer better performance or stability in different contexts. The most well-known are MAGMA~\cite{magma_1, magma_2}, STRUMPACK~\cite{strumpack}, PETSc~\cite{petsc-web-page}, Trilinos~\cite{trilinos-website}, and Hypre~\cite{hypre_library}.

\subsubsection{A Note on Distributed Solvers}
Given that the benchmark numerical experiment does not support MPI parallelism, the solvers were either compiled without MPI support or were run using a single rank with OpenMP parallelism instead. This approach allowed us to evaluate the solvers within the constraints of our experimental setup.

\subsubsection{A Note on cuSolver}
The cuSolver library included in the CUDA Toolkit was tested with QR factorization instead of LU factorization. Despite the GPU LU factorization routine being clearly documented~\cite{cuSolver}, it appears that it was never actually implemented. The declaration is missing from the cuSolver headers, and the function is not present in cuSolver 12.4. 

\onecolumn
\setlength{\extrarowheight}{15pt} 
\begin{table}[ht]
\centering
\begin{tabular}{lllll}
\toprule
\textbf{Solver}                               & \textbf{Platform} & \parbox{2.3cm}{\raggedright \textbf{Parallelisation Strategy}} & \textbf{Version} & \textbf{Algorithm}\\
\midrule
\textbf{cuSolver LU}~\cite{cuSolver}          & CPU               & No                                                             & 12.4             & \parbox{7.8cm}{\raggedright Not explicitly documented, however very likely sparse LU factorisation via Gaussian elimination with partial pivoting. ~\cite{cuSolverLU1, cuSolverLU2}.}                                               \\
\textbf{cuSolver QR}~\cite{cuSolver}          & CPU, GPU          & \parbox{2.3cm}{\raggedright Single GPU}                        & 12.4             & \parbox{7.8cm}{\raggedright Sparse QR factorization optimized for GPUs~\cite{sparse_QR}.}                                                                              \\
\textbf{Eigen BiCGSTAB}~\cite{eigenweb}       & CPU               & MT                                                             & 3.4.0            & \parbox{7.8cm}{\raggedright Iterative solver with improved stability derived from the BiCG method~\cite{bicgstab}.}                                                                                                \\
\textbf{Eigen DGMRES}~\cite{eigenweb}         & CPU               & No                                                             & 3.4.0            & \parbox{7.8cm}{\raggedright A restarted GMRES solver with deflation. Uses a few approximate eigenvectors at each iteration to build a preconditioner.\cite{DGMRES, performance_DGMRES, restarted_DGMRES}.}                                \\
\textbf{Eigen GMRES}~\cite{eigenweb}          & CPU               & No                                                             & 3.4.0            & \parbox{7.8cm}{\raggedright Iterative solver implementing standard GMRES algorithm. The solution is approximated as the minimal residual vector in a Krylov subspace~\cite{gmres}.}                                                      \\
\textbf{Eigen IDR($s$)}~\cite{eigenweb}           & CPU               & MT                                                             & 3.4.0            & \parbox{7.8cm}{\raggedright Iterative short-recurrence Krylov method based on the Induced Dimension Reduction theorem ~\cite{IDRS}.}                                                                                          \\
\textbf{Eigen IDR($s$)STAB($\ell$)}~\cite{eigenweb}       & CPU               & MT                                                             & 3.4.0            & \parbox{7.8cm}{\raggedright Iterative short-recurrence Krylov method derived as a combination of IDR($s$) and BiCGSTAB($\ell$)~\cite{IDRSTAB, precond_IDRSTAB}.}                                                                                  \\
\textbf{Eigen SparseLU}~\cite{eigenweb}       & CPU               & No                                                             & 3.4.0            & \parbox{7.8cm}{\raggedright Sparse LU factorisation via Gaussian elimination based on the implementation of SupeLU~\cite{eigen_sparselu}.}                                                                                          \\
\textbf{KLU}~\cite{KLU}                       & CPU               & No                                                             & 7.3.1            & \parbox{7.8cm}{\raggedright Fill-in minimising left-looking Sparse LU factorization optimized for circuit simulation and general sparse matrices~\cite{KLU}.}                                                                       \\
\textbf{MUMPS}~\cite{MUMPS}                   & CPU               & \parbox{2.3cm}{\raggedright MT, MPI}                           & 5.6.2            & \parbox{7.8cm}{\raggedright Parallel direct solver using a multifrontal approach~\cite{MUMPS}.}                                                                                                                                     \\
\textbf{oneMKL PARDISO}~\cite{oneMKL_pardiso} & CPU               & \parbox{2.3cm}{\raggedright MT, MPI}                           & 2024.0.0         & \parbox{7.8cm}{\raggedright Parallel supernodal left-right looking sparse LU factorization~\cite{schenk_pardiso}.}                                                                                           \\
\textbf{PaStiX}~\cite{pastix_lib} & CPU, GPU & \parbox{2.3cm}{\raggedright MT, MPI, Multi-GPU} & 5.2.3 & \parbox{7.8cm}{\raggedright Parallel supernodal sparse LU factorization with support for block low-rank (BLR) compression~\cite{pastix_blr, pastix_reord}.}
\\
\textbf{SuperLU}~\cite{superlu_seq}           & CPU               & \parbox{2.3cm}{\raggedright BLAS/LAPACK parallelism}           & 5.3.0            & \parbox{7.8cm}{\raggedright Gaussian elimination with partial pivoting~\cite{superlu_guide, superlu_algs}.}                                                            \\
\textbf{SuperLU-dist}~\cite{superlu_dist}     & CPU, GPU          & \parbox{2.3cm}{\raggedright MT, MPI, Multi-GPU}                & 8.2.1            & \parbox{7.8cm}{\raggedright Distributed memory extension of SuperLU. Uses static pivoting instead of partial pivoting to improve parallelism and reduce communication~\cite{superlu_guide, superlu_algs, superlu_static_pivoting}.} \\
\textbf{SPQR}~\cite{SPQR}                     & CPU, GPU          & \parbox{2.3cm}{\raggedright MT, Single-GPU}                    & 7.3.1            & \parbox{7.8cm}{\raggedright Parallel multifrontal sparse QR factorization.~\cite{SPQR}.}                                                                                  \\
\textbf{UmfpackLU}~\cite{UMFPACK}             & CPU               & No                                                             & 7.3.1            & \parbox{7.8cm}{\raggedright Sparse LU factorization using the unsymmetric multifrontal method~\cite{UMFPACK}.}                                                                                       \\
\bottomrule
\end{tabular}
\caption{Overview of evaluated sparse linear solvers with target platforms, parallelisation strategies, versions, and algorithms. ``MT" stands for multi-threaded, indicating OpenMP~\cite{openmp}, Intel TBB~\cite{intel-tbb} or POSIX Threads support. Whenever a solver is part of a larger library, the version of the library is listed.}
\label{tb:solver_overview}
\end{table}
\twocolumn

\section{Error estimates and conditioning}
This section discusses key concepts related to conditioning and error estimation when solving linear systems involving severely ill-conditioned matrices. We begin by defining conditioning and examining its impact on the accuracy of numerical solutions. We then derive relative error bounds for the solution of ill-conditioned linear systems: these bounds are essential in order to reliably evaluate the accuracy of numerical libraries and to assess the correctness of computed solutions. Finally, we define the relationship between matrix conditioning and numerical singularity, showing how poorly conditioned full-rank matrices can become singular when represented with limited precision float point numbers.

\subsection{Condition Number}

Conditioning is a measure of how the solution to a system of linear equations will change with respect to small changes in the input data. Mathematically, the condition number of a matrix \(A\) is defined as:
\begin{equation}
    \label{eq:kappa}
    \kappa_p(\mathbf{A}) = \|\mathbf{A}\|_p \cdot \|\mathbf{A}^{-1}\|_p
\end{equation}
where $\|A\|_p$ is a p-norm (usually $p = 1, 2, \infty$) of the matrix $A$ and $\|A^{-1}\|$ is the norm of its inverse. A high condition number indicates that the matrix is ill-conditioned, meaning that small changes in the input can lead to large changes in the solution $x$. Conditioning directly affects the stability and accuracy of numerical solutions to linear systems. It is important to note that conditioning is not related to numerical round-off errors introduced by computer floating point arithmetic, but rather is an intrinsic property of the matrix which represents the amplification of error.

\subsection{Computing Bounds on the Relative Error}
In this section we derive bounds on the relative error of the solution of a linear systems. When solving ill-conditoned sparse linear systems it is important to monitor these quantities in order to ensure the reliability of the obtained solution.

\begin{theorem} [Bounds on the relative error  $\|\mathbf{\hat x} - \mathbf{x}\| / \|\mathbf{x}\|$]
\label{th:error_x}
Given a linear system
\begin{equation}
\label{eq:lse}
\mathbf{Ax} = \mathbf{b}
\end{equation}
and an approximate solution
\begin{equation*}
\mathbf{\hat x} = \mathbf{x} + \Delta\mathbf{x}
\end{equation*}
such that:
\begin{gather*}
\mathbf{A\hat x} = \mathbf{\hat b} \\
\mathbf{\hat b} = \mathbf{b} + \Delta\mathbf{ b}
\end{gather*}
Then the relative error $\|\mathbf{\hat x} - \mathbf{x}\| / \|\mathbf{x}\|$ of the approximate solution $\mathbf{\hat x}$ to the true solution $\mathbf{x}$ is bounded by:
\begin{equation}
    \frac{1}{\kappa(\mathbf{A})} \frac{\|\mathbf{\hat b} - \mathbf{b} \|}{\|\mathbf{b}\|} \leq \frac{\|\mathbf{\hat x} - \mathbf{x}\|}{\|\mathbf{x}\|} \leq \kappa(\mathbf{A}) \frac{\|\mathbf{\hat b} - \mathbf{b} \|}{\|\mathbf{b}\|}
\end{equation}
where $\kappa(\mathbf{A})$ is the condition number of the matrix $\mathbf{A}$ as defined in eq.~\ref{eq:kappa}.

\end{theorem}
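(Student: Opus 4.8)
The plan is to exploit the linearity of the system together with the submultiplicativity of the induced matrix norm. The starting observation is that, since $\mathbf{A}(\mathbf{x} + \Delta\mathbf{x}) = \mathbf{b} + \Delta\mathbf{b}$ and $\mathbf{Ax} = \mathbf{b}$, subtracting the two relations yields the error equation $\mathbf{A}\,\Delta\mathbf{x} = \Delta\mathbf{b}$, equivalently $\Delta\mathbf{x} = \mathbf{A}^{-1}\Delta\mathbf{b}$. Noting that $\Delta\mathbf{x} = \mathbf{\hat x} - \mathbf{x}$ and $\Delta\mathbf{b} = \mathbf{\hat b} - \mathbf{b}$, the whole statement then reduces to bounding $\|\Delta\mathbf{x}\|/\|\mathbf{x}\|$ in terms of $\|\Delta\mathbf{b}\|/\|\mathbf{b}\|$ from both sides.

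For the upper bound, I would apply the submultiplicative inequality to $\Delta\mathbf{x} = \mathbf{A}^{-1}\Delta\mathbf{b}$ to get $\|\Delta\mathbf{x}\| \le \|\mathbf{A}^{-1}\|\,\|\Delta\mathbf{b}\|$, and separately apply it to $\mathbf{b} = \mathbf{Ax}$ to get $\|\mathbf{b}\| \le \|\mathbf{A}\|\,\|\mathbf{x}\|$, hence $1/\|\mathbf{x}\| \le \|\mathbf{A}\|/\|\mathbf{b}\|$. Multiplying these two estimates and recognizing the product $\|\mathbf{A}\|\,\|\mathbf{A}^{-1}\| = \kappa(\mathbf{A})$ gives the right-hand inequality directly.

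For the lower bound I would run the same two inequalities in the reverse direction: from $\Delta\mathbf{b} = \mathbf{A}\,\Delta\mathbf{x}$ I obtain $\|\Delta\mathbf{b}\| \le \|\mathbf{A}\|\,\|\Delta\mathbf{x}\|$, i.e.\ $\|\Delta\mathbf{x}\| \ge \|\Delta\mathbf{b}\|/\|\mathbf{A}\|$, while from $\mathbf{x} = \mathbf{A}^{-1}\mathbf{b}$ I obtain $\|\mathbf{x}\| \le \|\mathbf{A}^{-1}\|\,\|\mathbf{b}\|$. Dividing these and again collecting $\|\mathbf{A}\|\,\|\mathbf{A}^{-1}\|$ into $\kappa(\mathbf{A})$ produces the left-hand inequality, completing the two-sided bound.

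There is no deep obstacle here; the result is classical and the only care needed is bookkeeping — making sure each of the four scalar inequalities points in the direction consistent with the bound being assembled, and that the norm used is submultiplicative and matches the one defining $\kappa(\mathbf{A})$ in eq.~\ref{eq:kappa}. A minor implicit assumption worth flagging is that $\mathbf{A}$ is invertible and $\mathbf{b} \neq \mathbf{0}$, so that $\mathbf{x} \neq \mathbf{0}$ and the relative quantities are well defined; both conditions hold for the nonsingular full-rank systems under consideration.
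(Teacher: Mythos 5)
Your proposal is correct and follows essentially the same route as the paper: both derive the error equation $\mathbf{A}\,\Delta\mathbf{x} = \Delta\mathbf{b}$, apply submultiplicativity to $\Delta\mathbf{x} = \mathbf{A}^{-1}\Delta\mathbf{b}$ and $\Delta\mathbf{b} = \mathbf{A}\,\Delta\mathbf{x}$ for the two directions, and close with the auxiliary bounds $\|\mathbf{b}\| \leq \|\mathbf{A}\|\,\|\mathbf{x}\|$ and $\|\mathbf{x}\| \leq \|\mathbf{A}^{-1}\|\,\|\mathbf{b}\|$. Your explicit flagging of the invertibility of $\mathbf{A}$ and $\mathbf{b} \neq \mathbf{0}$ is a small but welcome addition the paper leaves implicit.
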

\begin{proof}
\textbf{Upper Bound}
\begin{align*}
\|\mathbf{\hat x} - \mathbf{x}\| &= \|\mathbf{A}^{-1}(\mathbf{A}\mathbf{\hat x} - \mathbf{A}\mathbf{x})\| = \\
&= \|\mathbf{A}^{-1}(\mathbf{\hat b} - \mathbf{b})\| \leq \|\mathbf{A}^{-1}\| \|\mathbf{\hat b} - \mathbf{b}\|
\end{align*}
Divide both sides of the inequality by $\|\mathbf{x}\|$:
\begin{align*}
\frac{\|\mathbf{\hat x} - \mathbf{x}\|}{\|\mathbf{x}\|} \leq \frac{\|\mathbf{A}^{-1}\| \|\mathbf{\hat b} - \mathbf{b}\|}{\|\mathbf{x}\|} = \kappa(\mathbf{A})\frac{\|\mathbf{\hat b} - \mathbf{b}\|}{\|\mathbf{A}\| \|\mathbf{x}\|}
\end{align*}
Notice that:
\begin{equation*}
\|\mathbf{b}\| = \|\mathbf{Ax}\| \leq \|\mathbf{A}\| \|\mathbf{x}\|
\end{equation*}
Thus:
\begin{align*}
\frac{\|\mathbf{\hat x} - \mathbf{x}\|}{\|\mathbf{x}\|} \leq \kappa(\mathbf{A}) \frac{\|\mathbf{\hat b} - \mathbf{b}\|}{\|\mathbf{b}\|}
\end{align*}

\noindent\textbf{Lower Bound}
\begin{align*}
\|\mathbf{\hat b} - \mathbf{b}\| &= \|\mathbf{A} (\mathbf{\hat x} - \mathbf{x})\| \leq \|\mathbf{A}\| \|\mathbf{\hat x} - \mathbf{x}\|
\end{align*}
Divide both sides of the inequality by $\|\mathbf{x}\|$ and rearrange:
\begin{align*}
\frac{\|\mathbf{\hat x} - \mathbf{x}\|}{\|\mathbf{x}\|} \geq \frac{\|\mathbf{\hat b} - \mathbf{b}\|}{\|\mathbf{A}\|\|\mathbf{x}\|} = \frac{\|\mathbf{\hat b} - \mathbf{b}\|}{\|\mathbf{A}\|\|\mathbf{A}^{-1}\mathbf{b}\|}
\end{align*}
Notice that:
\begin{equation*}
\|\mathbf{A}^{-1}\mathbf{b}\| \leq \|\mathbf{A}\|^{-1} \|\mathbf{b}\|
\end{equation*}
Thus:
\begin{align*}
\frac{\|\mathbf{\hat x} - \mathbf{x}\|}{\|\mathbf{x}\|} \geq  \frac{\|\mathbf{\hat b} - \mathbf{b}\|}{\|\mathbf{A}\|\|\mathbf{A}^{-1}\| \|\mathbf{b}\|} = \frac{1}{\kappa(\mathbf{A})}\frac{\|\mathbf{\hat b} - \mathbf{b}\|}{\|\mathbf{b}\|}
\end{align*}
$\qedsymbol$
\end{proof}

Theorem~\ref{th:error_x} is extremely important as it suggests that a small relative residual implies an accurate solution only if the conditioning number $\kappa(\mathbf{A})$ is small. This means that simply monitoring $\|\mathbf{A\hat x} - \mathbf{b}\| / \|\mathbf{b} \|$ as is often done does not guarantee correctness of the solution: even if this quantity approaches machine epsilon, the solution can still be completely meaningless.
The drawback of the bounds derived in Theorem~\ref{th:error_x} is that they are often not tight, leading to potentially correct solutions being flagged as unreliable. However, by considering the relative error $\|\mathbf{\hat x} - \mathbf{x}\| / \|\mathbf{\hat x}\|$ with respect to $\|\mathbf{\hat x}\|$ instead, it is possible to derive tighter bounds.

\begin{theorem} [Bounds on the relative error  
$\|\mathbf{\hat x} - \mathbf{x}\| / \| \mathbf{\hat x}\|$]
\label{th:err_hatx}
Given an approximate solution $\mathbf{\hat x}$ to a linear system as described in Theorem~\ref{th:error_x}, the relative error $\|\mathbf{\hat x} - \mathbf{x}\| / \| \mathbf{\hat x}\|$ is bounded by:
\begin{equation}
\frac{\|\mathbf{\hat b} - \mathbf{b}\|}{\|\mathbf{A}\| \|\mathbf{\hat x}\|} \leq \frac{\|\mathbf{\hat x} - \mathbf{x}\|}{\|\mathbf{\hat x}\|} \leq  \kappa(\mathbf{A})\frac{\|\mathbf{\hat b} - \mathbf{b}\|}{\|\mathbf{A}\| \|\mathbf{\hat x}\|}
\end{equation}
\end{theorem}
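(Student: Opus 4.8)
The plan is to mirror the structure of the proof of Theorem~\ref{th:error_x}, but to normalize by the computable quantity $\|\mathbf{\hat x}\|$ rather than by $\|\mathbf{x}\|$. This is the decisive change: dividing by $\|\mathbf{\hat x}\|$ lets me avoid the auxiliary inequality $\|\mathbf{b}\| = \|\mathbf{Ax}\| \leq \|\mathbf{A}\|\|\mathbf{x}\|$ (and its lower-bound analogue) that were needed to convert $\|\mathbf{x}\|$ into $\|\mathbf{b}\|$ in the previous theorem, and which introduced the slack making those bounds loose. Both directions rest on the single algebraic identity $\mathbf{\hat b} - \mathbf{b} = \mathbf{A}\mathbf{\hat x} - \mathbf{A}\mathbf{x} = \mathbf{A}(\mathbf{\hat x} - \mathbf{x})$ combined with submultiplicativity of the operator norm.

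For the upper bound, first I would invert the identity to write $\mathbf{\hat x} - \mathbf{x} = \mathbf{A}^{-1}(\mathbf{\hat b} - \mathbf{b})$ and apply submultiplicativity to obtain $\|\mathbf{\hat x} - \mathbf{x}\| \leq \|\mathbf{A}^{-1}\| \|\mathbf{\hat b} - \mathbf{b}\|$. Dividing both sides by $\|\mathbf{\hat x}\|$ and then multiplying and dividing the right-hand side by $\|\mathbf{A}\|$ turns $\|\mathbf{A}^{-1}\|$ into the factor $\|\mathbf{A}\|\|\mathbf{A}^{-1}\| = \kappa(\mathbf{A})$, yielding exactly $\kappa(\mathbf{A})\,\|\mathbf{\hat b} - \mathbf{b}\| / (\|\mathbf{A}\|\|\mathbf{\hat x}\|)$. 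No further manipulation of the denominator is required, since $\|\mathbf{\hat x}\|$ already appears in the target expression.

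For the lower bound, I would use the identity in the forward direction, $\mathbf{\hat b} - \mathbf{b} = \mathbf{A}(\mathbf{\hat x} - \mathbf{x})$, so that $\|\mathbf{\hat b} - \mathbf{b}\| \leq \|\mathbf{A}\|\|\mathbf{\hat x} - \mathbf{x}\|$, and rearrange to $\|\mathbf{\hat x} - \mathbf{x}\| \geq \|\mathbf{\hat b} - \mathbf{b}\| / \|\mathbf{A}\|$. Dividing through by $\|\mathbf{\hat x}\|$ gives the left-hand inequality directly, with no condition number appearing.

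Unlike Theorem~\ref{th:error_x}, I expect no genuine obstacle here: the argument is shorter precisely because normalizing by $\|\mathbf{\hat x}\|$ eliminates the two norm inequalities that loosened the original estimate. The only point worth emphasizing in the write-up is that $\|\mathbf{\hat x}\|$ is directly available from the computed solution, so these bounds are both tighter and cheaper to evaluate in practice than those of the preceding theorem.
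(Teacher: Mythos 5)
Your proposal is correct and follows the paper's proof essentially verbatim: both directions reuse the intermediate inequalities $\|\mathbf{\hat x} - \mathbf{x}\| \leq \|\mathbf{A}^{-1}\|\|\mathbf{\hat b} - \mathbf{b}\|$ and $\|\mathbf{\hat b} - \mathbf{b}\| \leq \|\mathbf{A}\|\|\mathbf{\hat x} - \mathbf{x}\|$ established in Theorem~\ref{th:error_x}, divide by $\|\mathbf{\hat x}\|$, and (for the upper bound) multiply and divide by $\|\mathbf{A}\|$ to surface $\kappa(\mathbf{A})$. Your observation that normalizing by $\|\mathbf{\hat x}\|$ avoids the slack-introducing conversions between $\|\mathbf{x}\|$ and $\|\mathbf{b}\|$ matches the paper's motivation for this theorem.
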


\begin{proof}
\textbf{Upper Bound} 

\noindent We have shown in Theorem~\ref{th:error_x} that:
\begin{align*}
\|\mathbf{\hat x} - \mathbf{x}\|  \leq \|\mathbf{A}^{-1}\| \|\mathbf{\hat b} - \mathbf{b}\|
\end{align*}
Divide both sides of the inequality by $\|\mathbf{\hat x}\|$:
\begin{align*}
\frac{\|\mathbf{\hat x} - \mathbf{x}\|}{\|\mathbf{\hat x}\|} \leq \frac{\|\mathbf{A}^{-1}\| \|\mathbf{\hat b} - \mathbf{b}\|}{\|\mathbf{\hat x}\|} = \kappa(\mathbf{A})\frac{\|\mathbf{\hat b} - \mathbf{b}\|}{\|\mathbf{A}\| \|\mathbf{\hat x}\|}
\end{align*}

\noindent\textbf{Lower Bound}
\noindent Similarly, we have shown in Theorem~\ref{th:error_x} that:
\begin{align*}
\|\mathbf{\hat b} - \mathbf{b}\| \leq \|\mathbf{A}\| \|\mathbf{\hat x} - \mathbf{x}\|
\end{align*}
Rearrange and divide both sides of the inequality by $\|\mathbf{\hat x}\|$:
\begin{align*}
\frac{\|\mathbf{\hat x} - \mathbf{x}\|}{\|\mathbf{\hat x}\|} \geq \frac{\|\mathbf{\hat b} - \mathbf{b}\|}{\|\mathbf{A}\|\|\mathbf{\hat x}\|}
\end{align*}
\qedsymbol
\end{proof}

Theorem~\ref{th:err_hatx} gives a tight bound on the error relative to $\|\mathbf{\hat x}\|$. However, since we are interested in the deviation from $\mathbf{x}$, it is important to note what normalizing with respect to $\mathbf{\hat x}$ implies. Theorem~\ref{th:err_x_tight} proves that a tight upper bound on the  relative error $\|\mathbf{\hat x} - \mathbf{x}\| / \| \mathbf{\hat x}\|$  guarantees a tight bound on the error $\|\mathbf{\hat x} - \mathbf{x}\| / \| \mathbf{x}\|$.

\begin{theorem} [Tight upper bound of the error   
$\|\mathbf{\hat x} - \mathbf{x}\| / \| \mathbf{x}\|$]
\label{th:err_x_tight}
Let $\varepsilon = \|\mathbf{\hat x} - \mathbf{x}\| / \| \mathbf{\hat x}\|$ be the error of an approximate solution to a linear system relative to the approximate solution $\mathbf{\hat x}$. Then the error $\|\mathbf{\hat x} - \mathbf{x}\| / \| \mathbf{x}\|$ relative to the true solution $\mathbf{x}$ is bounded by:
\begin{equation}
\label{eq:epsilon_bound}
\frac{\|\mathbf{\hat x} - \mathbf{x}\|}{ \| \mathbf{x}\|} \leq \frac{\varepsilon}{1 - \varepsilon}
\end{equation}
\end{theorem}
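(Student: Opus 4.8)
The plan is to reduce everything to the reverse triangle inequality. First I would unpack the definition: since $\varepsilon = \|\mathbf{\hat x} - \mathbf{x}\| / \|\mathbf{\hat x}\|$, the numerator satisfies $\|\mathbf{\hat x} - \mathbf{x}\| = \varepsilon \|\mathbf{\hat x}\|$. The only quantity I still need to control is $\|\mathbf{x}\|$ in the denominator of the target expression, and I would relate it to the known quantity $\|\mathbf{\hat x}\|$.

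The key step is to write $\mathbf{x} = \mathbf{\hat x} - (\mathbf{\hat x} - \mathbf{x})$ and apply the reverse triangle inequality, which gives
\begin{equation*}
\|\mathbf{x}\| \geq \|\mathbf{\hat x}\| - \|\mathbf{\hat x} - \mathbf{x}\| = \|\mathbf{\hat x}\| - \varepsilon\|\mathbf{\hat x}\| = (1 - \varepsilon)\|\mathbf{\hat x}\|.
\end{equation*}
This lower bound on $\|\mathbf{x}\|$ is exactly what is required, as it converts the denominator we cannot measure directly into the computable norm $\|\mathbf{\hat x}\|$ scaled by $(1 - \varepsilon)$.

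To finish, I would substitute both relations into the ratio of interest:
\begin{equation*}
\frac{\|\mathbf{\hat x} - \mathbf{x}\|}{\|\mathbf{x}\|} \leq \frac{\varepsilon\|\mathbf{\hat x}\|}{(1 - \varepsilon)\|\mathbf{\hat x}\|} = \frac{\varepsilon}{1 - \varepsilon},
\end{equation*}
where the $\|\mathbf{\hat x}\|$ factors cancel, yielding the stated bound in eq.~\ref{eq:epsilon_bound}.

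The main thing to watch is the hypothesis $\varepsilon < 1$: the division by $1 - \varepsilon$ and the direction of the final inequality are only valid when $1 - \varepsilon > 0$. This is a mild and natural assumption, since $\varepsilon \geq 1$ would indicate an approximate solution whose error is as large as the solution itself, a regime in which no meaningful relative error bound can be expected anyway. I would simply note this assumption explicitly. Beyond that caveat, the argument is a direct two-line application of the reverse triangle inequality, so I do not anticipate any genuine obstacle.
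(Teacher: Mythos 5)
Your proof is correct and is essentially the same argument as the paper's: the paper applies the triangle inequality to $\|\mathbf{\hat x}\| = \|\mathbf{\hat x} - \mathbf{x} + \mathbf{x}\|$ and rearranges to get $1 - \varepsilon \leq \|\mathbf{x}\|/\|\mathbf{\hat x}\|$, which is precisely your reverse-triangle-inequality bound $\|\mathbf{x}\| \geq (1-\varepsilon)\|\mathbf{\hat x}\|$, followed by the same substitution under the same assumption $\varepsilon < 1$. Your version is slightly more streamlined, and your explicit remark that the hypothesis $\varepsilon < 1$ is needed matches the assumption the paper also makes mid-proof.
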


\begin{proof}
\begin{gather*}
    \|\mathbf{\hat x}\| = \|\mathbf{\hat x} - \mathbf{x} + \mathbf{x}\| \leq\|\mathbf{\hat x} - \mathbf{x}\| + \|\mathbf{x}\|
\end{gather*}
Divide both sides by $\|\mathbf{\hat x}\|$:
\begin{gather*}
1 \leq \frac{\|\mathbf{\hat x} - \mathbf{x}\|}{\|\mathbf{\hat x}\|} + \frac{\|\mathbf{x}\|}{\|\mathbf{\hat x}\|} \\
1 \leq \varepsilon + \frac{\|\mathbf{x}\|}{\|\mathbf{\hat x}\|} \Leftrightarrow
 1 - \varepsilon \leq \frac{\|\mathbf{x}\|}{\|\mathbf{\hat x}\|}
\end{gather*}
Assume $\varepsilon < 1$:
\begin{gather*}
  \frac{\|\mathbf{\hat x}\|}{\|\mathbf{x}\|} \leq  \frac{1}{1 - \varepsilon}
\end{gather*}
Notice that:
\begin{gather*}
\frac{\|\mathbf{\hat x}\|}{\|\mathbf{x}\|} = \frac{1}{\varepsilon}\frac{\|\mathbf{\hat x} - \mathbf{x}\|}{\|\mathbf{x}\|}
\end{gather*}
Substitute and reorder:
\begin{gather*}
\frac{1}{\varepsilon}\frac{\|\mathbf{\hat x} - \mathbf{x}\|}{\|\mathbf{x}\|} \leq \frac{1}{1 - \varepsilon}\\
\frac{\|\mathbf{\hat x} - \mathbf{x}\|}{\|\mathbf{x}\|} \leq \frac{\varepsilon}{1 - \varepsilon}
\end{gather*}
\qedsymbol
\end{proof}
For $\varepsilon \ll 1$, eq.~\ref{eq:epsilon_bound} converges approximately to $\varepsilon$:
\begin{equation*}
\frac{\|\mathbf{\hat x} - \mathbf{x}\|}{ \| \mathbf{x}\|} \leq \frac{\varepsilon}{1 - \varepsilon} \approx \varepsilon
\end{equation*}
This proves that, so long as the tight upper bound on \mbox{$\|\mathbf{\hat x} - \mathbf{x}\| / \| \mathbf{\hat x}\|$} is much smaller than $1$, then the error relative to the true solution will also be similarly small. However, as soon as $\varepsilon$ grows towards $1$, then the error relative to $\mathbf{x}$ quickly becomes unbounded.
The key takeaways of this error analysis are thus the following:
\begin{itemize}
    \item {The condition number $\kappa(\mathbf{A})$ quantifies the amplification of the backward error from the residual vector to the true solution and is a property intrinsic to the matrix and unrelated to finite-precision arithmetic.}
    \item {Checking the correctenss of a solution by computing the relative residual $\|\mathbf{A\hat x} - \mathbf{b}\| / \|\mathbf{b} \|$ as is often done is misleading and not a reliable approach.}
    \item {A reliable way of estimating the relative error of the true solution is to compute the upper bound of the error relative to $\mathbf{\hat x}$: $\frac{\|\mathbf{\hat x} - \mathbf{x}\|}{\|\mathbf{\hat x}\|} \leq \varepsilon =   \kappa(\mathbf{A})\frac{\|\mathbf{\hat b} - \mathbf{b}\|}{\|\mathbf{A}\| \|\mathbf{\hat x}\|}$. So long as $\varepsilon \ll 1$ is small enough (e.g. $\varepsilon \leq 10^{-7}$)}, then it is guaranteed that the error relative to the true solution is similarly small.
\end{itemize}
By following these three points, it is possible to ensure that the accuracy of an approximate solution is evaluated in a robust and reliable way.

\subsection{Condition Number and Numerical Singularity}

In practice, solving a linear system is done using floating-point arithmetic, which inevitably introduces rounding errors due to the finite precision of the data type. This section explores the relationship between round-off error, matrix conditioning, and numerical singularity.

Consider the scenario where we solve a linear system as described in Theorem~\ref{th:error_x} using floating-point arithmetic. Suppose we have an idealized algorithm that operates with infinite precision throughout the solution process and only applies rounding at the final step. Theorem~\ref{th:numerical_singularity} establishes a threshold for the conditioning of $\mathbf{A}$, beyond which the matrix is deemed numerically singular, rendering the computed solution $\mathbf{\hat{x}}$ unreliable.

\begin{theorem}[Numerical singularity of a matrix]
\label{th:numerical_singularity}
\theoremnewline
Given a linear system $\mathbf{Ax}=\mathbf{b}$ and its finite-precision representation
\begin{equation*}
\mathbf{\hat A}\mathbf{\hat x}=\mathbf{\hat b}
\end{equation*}
such that
\begin{gather*}
\mathbf{\hat A} =\mathbf{A}+\Delta \mathbf{A}, \ \ \mathbf{\hat x} =\mathbf{x}+\Delta \mathbf{x}, \ \ \mathbf{\hat b} =\mathbf{b}+\Delta \mathbf{b} \\
\frac{\|\mathbf{\hat A} - \mathbf{A} \|}{\|\mathbf{A}\|} \leq \varepsilon, \quad \frac{\|\mathbf{\hat b} - \mathbf{b} \|}{\|\mathbf{b}\|} \leq \varepsilon
\end{gather*}
where $\varepsilon$ is the machine precision of a specific floating point datatype, then the irreducible error introduced by rounding operations is bounded by:

\begin{equation}
\label{eq:numerical_singularity}
\frac{\|\mathbf{\hat x} - \mathbf{x}\|}{\|\mathbf{x}\|} \leq 2\varepsilon\frac{\kappa(\mathbf{A})}{1 - \kappa(\mathbf{A})\varepsilon}
\end{equation}
\end{theorem}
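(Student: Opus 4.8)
The plan is to treat this as the classical first-order perturbation bound for linear systems (a Stewart/Wilkinson-type estimate) and to derive it from a governing equation for the total error $\Delta\mathbf{x} = \hat{\mathbf{x}} - \mathbf{x}$, controlling the perturbed inverse via a Neumann-series estimate. First I would expand the perturbed system $\hat{\mathbf{A}}\hat{\mathbf{x}} = \hat{\mathbf{b}}$ as $(\mathbf{A} + \Delta\mathbf{A})(\mathbf{x} + \Delta\mathbf{x}) = \mathbf{b} + \Delta\mathbf{b}$, cancel the exact relation $\mathbf{Ax} = \mathbf{b}$, and collect the terms multiplying $\Delta\mathbf{x}$. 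This yields the exact identity
\[(\mathbf{A} + \Delta\mathbf{A})\Delta\mathbf{x} = \Delta\mathbf{b} - \Delta\mathbf{A}\mathbf{x},\]
where the $\Delta\mathbf{A}\Delta\mathbf{x}$ cross term is absorbed into the left-hand factor. Crucially, the right-hand side stays expressed in terms of the \emph{true} solution $\mathbf{x}$ rather than $\hat{\mathbf{x}}$, so that the resulting bound will be normalized by $\|\mathbf{x}\|$ exactly as the statement requires.

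Next I would factor $\mathbf{A} + \Delta\mathbf{A} = \mathbf{A}(\mathbf{I} + \mathbf{A}^{-1}\Delta\mathbf{A})$ and invert to obtain $\Delta\mathbf{x} = (\mathbf{I} + \mathbf{A}^{-1}\Delta\mathbf{A})^{-1}\mathbf{A}^{-1}(\Delta\mathbf{b} - \Delta\mathbf{A}\mathbf{x})$. The key technical ingredient is the perturbation (Neumann-series) lemma: provided $\|\mathbf{A}^{-1}\Delta\mathbf{A}\| < 1$, the matrix $\mathbf{I} + \mathbf{A}^{-1}\Delta\mathbf{A}$ is invertible with $\|(\mathbf{I} + \mathbf{A}^{-1}\Delta\mathbf{A})^{-1}\| \leq (1 - \|\mathbf{A}^{-1}\Delta\mathbf{A}\|)^{-1}$. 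Using submultiplicativity together with the hypothesis $\|\Delta\mathbf{A}\| \leq \varepsilon\|\mathbf{A}\|$, I would bound $\|\mathbf{A}^{-1}\Delta\mathbf{A}\| \leq \|\mathbf{A}^{-1}\|\|\Delta\mathbf{A}\| \leq \varepsilon\kappa(\mathbf{A})$, which is precisely what produces the factor $1 - \kappa(\mathbf{A})\varepsilon$ in the denominator of the claimed bound.

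Finally I would take norms, giving $\|\Delta\mathbf{x}\| \leq (1 - \varepsilon\kappa(\mathbf{A}))^{-1}\,\|\mathbf{A}^{-1}\|\,(\|\Delta\mathbf{b}\| + \|\Delta\mathbf{A}\|\|\mathbf{x}\|)$, then divide through by $\|\mathbf{x}\|$ and treat the two source terms separately. The $\Delta\mathbf{A}$ term contributes $\|\mathbf{A}^{-1}\|\|\Delta\mathbf{A}\| \leq \varepsilon\kappa(\mathbf{A})$ directly. For the $\Delta\mathbf{b}$ term I would convert $\|\Delta\mathbf{b}\|/\|\mathbf{x}\|$ using $\|\Delta\mathbf{b}\| \leq \varepsilon\|\mathbf{b}\|$ together with the inequality $\|\mathbf{b}\| = \|\mathbf{Ax}\| \leq \|\mathbf{A}\|\|\mathbf{x}\|$ (already exploited in Theorem~\ref{th:error_x}), which again yields $\varepsilon\kappa(\mathbf{A})$. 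Summing the two equal contributions gives the numerator $2\varepsilon\kappa(\mathbf{A})$ and hence the stated bound $2\varepsilon\,\kappa(\mathbf{A})/(1 - \kappa(\mathbf{A})\varepsilon)$.

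The main obstacle is the invertibility step rather than any calculation: one must justify that the perturbed matrix $\hat{\mathbf{A}}$ remains nonsingular, which holds exactly when $\kappa(\mathbf{A})\varepsilon < 1$. This condition is not merely a technical convenience but carries the central meaning of the theorem — as $\kappa(\mathbf{A})\varepsilon \to 1$ the denominator collapses and the bound diverges, marking the threshold beyond which $\mathbf{A}$ is numerically singular and the computed $\hat{\mathbf{x}}$ ceases to be reliable. Everything beyond the Neumann-series estimate reduces to routine applications of the triangle inequality and norm submultiplicativity.
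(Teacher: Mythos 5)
Your proposal is correct and lands on exactly the stated bound, but it routes the argument through a different key lemma than the paper does. The paper keeps the second-order cross term $\Delta\mathbf{A}\Delta\mathbf{x}$ on the right-hand side, writes $\Delta\mathbf{x} = \mathbf{A}^{-1}(\Delta\mathbf{b} - \Delta\mathbf{A}\mathbf{x} - \Delta\mathbf{A}\Delta\mathbf{x})$, takes norms, and then moves the scalar term $\|\mathbf{A}^{-1}\|\|\Delta\mathbf{A}\|\|\Delta\mathbf{x}\|$ to the left to obtain the factor $\bigl(1 - \|\mathbf{A}^{-1}\|\|\Delta\mathbf{A}\|\bigr)^{-1}$; no operator inversion beyond $\mathbf{A}^{-1}$ is ever performed, and the condition $\|\mathbf{A}^{-1}\|\|\Delta\mathbf{A}\| < 1$ is only needed so that the division is legitimate. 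You instead absorb the cross term into the perturbed operator, writing $(\mathbf{A}+\Delta\mathbf{A})\Delta\mathbf{x} = \Delta\mathbf{b} - \Delta\mathbf{A}\mathbf{x}$, factor out $\mathbf{A}(\mathbf{I}+\mathbf{A}^{-1}\Delta\mathbf{A})$, and invoke the Neumann-series bound $\|(\mathbf{I}+\mathbf{A}^{-1}\Delta\mathbf{A})^{-1}\| \leq (1-\|\mathbf{A}^{-1}\Delta\mathbf{A}\|)^{-1}$. The two routes produce identical denominators (both are then relaxed to $1-\kappa(\mathbf{A})\varepsilon$, which only enlarges the bound) and share the final step of converting $\|\Delta\mathbf{b}\|/\|\mathbf{x}\|$ via $\|\mathbf{b}\| \leq \|\mathbf{A}\|\|\mathbf{x}\|$. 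What your version buys is a genuine invertibility statement: the Neumann lemma certifies that $\hat{\mathbf{A}}$ is nonsingular when $\kappa(\mathbf{A})\varepsilon < 1$, so $\hat{\mathbf{x}}$ is actually well defined, a point the paper's scalar manipulation leaves implicit. What the paper's version buys is elementarity — it needs nothing beyond the triangle inequality and submultiplicativity, and the $1/(1-\kappa\varepsilon)$ blow-up emerges from pure arithmetic rather than from an operator-norm estimate.
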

\begin{proof}
\begin{gather*}
\mathbf{\hat A}\mathbf{\hat x}=\mathbf{\hat b} \\
(\mathbf{A} + \Delta \mathbf{A}) (\mathbf{x} + \Delta \mathbf{x}) = \mathbf{b} + \Delta \mathbf{b} \\
\cancel{\mathbf{Ax}} + \Delta\mathbf{Ax} + \mathbf{A}\Delta\mathbf{x} + \Delta \mathbf{A}\Delta\mathbf{x}= \cancel{\mathbf{b}} + \Delta\mathbf{b} \\
\mathbf{A}\Delta\mathbf{x} = \Delta\mathbf{b} - \Delta\mathbf{Ax} -  \Delta \mathbf{A}\Delta\mathbf{x} \\
\Delta\mathbf{x} = \mathbf{A}^{-1}(\Delta\mathbf{b} - \Delta\mathbf{Ax} -  \Delta \mathbf{A}\Delta\mathbf{x})\\
\end{gather*}
Take the norm of both sides
\begin{align*}
\|\Delta\mathbf{x}\| &= \|\mathbf{A}^{-1}(\Delta\mathbf{b} - \Delta\mathbf{Ax} -  \Delta \mathbf{A}\Delta\mathbf{x})\| \\
&\leq \|\mathbf{A}^{-1}\| \|\Delta\mathbf{b} - \Delta\mathbf{Ax} -  \Delta \mathbf{A}\Delta\mathbf{x}\| \\
& \leq\|\mathbf{A}^{-1}\| (\|\Delta\mathbf{b}\| + \|\Delta\mathbf{A}\| \|\mathbf{x}\| +  \|\Delta \mathbf{A}\| \|\Delta\mathbf{x}\|)
\end{align*}
Move $\Delta \mathbf{x}$ terms to the left
\begin{gather*}
(1 - \|\mathbf{A}^{-1}\| \|\Delta \mathbf{A}\|) \|\Delta\mathbf{x}\|  \leq \|\mathbf{A}^{-1}\| (\|\Delta\mathbf{b}\| + \|\Delta\mathbf{A}\| \|\mathbf{x}\|)
\end{gather*}
Assuming $\|\mathbf{A}^{-1}\| \|\Delta \mathbf{A}\| < 1$, rearrange and divide both sides by $\| \mathbf{x}\|$
\begin{gather*}
\frac{\|\Delta\mathbf{x}\|}{\|\mathbf{x}\|}  \leq \frac{\|\mathbf{A}^{-1}\|}{1 - \|\mathbf{A}^{-1}\| \|\Delta \mathbf{A}\|} \left(\frac{\|\Delta\mathbf{b}\|}{\|\mathbf{x}\|} + \|\Delta\mathbf{A}\|\right)
\end{gather*}
Introduce $\|\mathbf{A}\|$ twice on the right
\begin{equation*}
\frac{\|\Delta\mathbf{x}\|}{\|\mathbf{x}\|}  \leq \frac{\|\mathbf{A}^{-1}\|\|\mathbf{A}\|}{1 - \|\mathbf{A}^{-1}\|\|\mathbf{A}\| \frac{\|\Delta \mathbf{A}\|}{\|\mathbf{A}\|}} \left(\frac{\|\Delta\mathbf{b}\|}{\|\mathbf{A}\|\|\mathbf{x}\|} + \frac{\|\Delta\mathbf{A}\|}{\|\mathbf{A}\|}\right)
\end{equation*}
Notice that:
\begin{equation*}
\|\mathbf{b}\| = \|\mathbf{Ax}\| \leq \|\mathbf{A}\| \|\mathbf{x}\|
\end{equation*}
Thus, after a few substitutions, we obtain:
\begin{equation*}
\frac{\|\mathbf{\hat x} - \mathbf{x}\|}{\|\mathbf{x}\|}  \leq \frac{\kappa(\mathbf{A})}{1 - \kappa(\mathbf{A}) \frac{\|\Delta \mathbf{A}\|}{\|\mathbf{A}\|}} \left(\frac{\|\Delta\mathbf{b}\|}{\|\mathbf{b}\|} + \frac{\|\Delta\mathbf{A}\|}{\|\mathbf{A}\|}\right)
\end{equation*}
Finally, introducing the machine epsilon $\varepsilon$, we reach
\begin{equation*}
\frac{\|\mathbf{\hat x} - \mathbf{x}\|}{\|\mathbf{x}\|}  \leq \frac{\kappa(\mathbf{A})}{1 - \kappa(\mathbf{A})\varepsilon} (\varepsilon + \varepsilon) = 2\varepsilon\frac{\kappa(\mathbf{A})}{1 - \kappa(\mathbf{A})\varepsilon} 
\end{equation*}
\qedsymbol
\end{proof}
Clearly as the condition number of $\kappa(\mathbf{A})$ approaches $1/\varepsilon$ eq.~\ref{eq:numerical_singularity} diverges towards $+\infty$. Practically this means that the computed solution $\mathbf{\hat x}$ becomes increasingly unreliable as the effects of round-off errors become more prominent. When $\kappa(\mathbf{A})$ becomes equal to $1/\varepsilon$, the relative error of the solution becomes unbounded and the matrix $\mathbf{A}$ must be considered numerically singular as it can potentially be rounded to a singular $\mathbf{\hat A}$.

\section{Computing the condition number of a matrix}

Accurately estimating the $\mathcal{L}_2$ condition number of a matrix is crucial for bounding the relative $\mathcal{L}_2$ error of the solution. Efficient methods like Hager's algorithm~\cite{hager_conditioning} are used to estimate the $\mathcal{L}_1$ or $\mathcal{L}_\infty$ condition numbers in libraries such as LAPACK~\cite{lapack1987} and Intel oneAPI MKL~\cite{intel2024gecon}. The algorithm works by iteratively searching a solution to the maximization problem $\max \| \mathbf{A}^{-1} \mathbf{v}\|_1$ subject to the constraint $\|\mathbf{v}\|_1=1$, which is convex but not differentiable. These estimates can vary by orders of magnitude from the true value, which may suffice for many applications but not for accuracy benchmarks. Moreover, to the best of our knowledge, no implementation of this method is currently publicly available for sparse matrices. NumPy and MATLAB compute the $\mathcal{L}_2$ condition number accurately using an alternative definition based on the singular values of a matrix. In fact, it can be shown that: 
\begin{equation}
\kappa_2(\mathbf{A}) = \|\mathbf{A}\|_2 \cdot \|\mathbf{A}^{-1}\|_2 = \frac{\max(\sigma(\mathbf{A}))}{\min(\sigma(\mathbf{A}))}
\end{equation}
where $\sigma(\mathbf{A})$ are the singular values of $\mathbf{A}$~\cite{matrix_cookbook}. In practice, these libraries compute $\kappa_2(\mathbf{A})$ by means of a dense SVD decomposition~\cite{numpy_cond, matlab_cond}. While accurate, this approach is computationally expensive and not scalable in size. For large sparse matrices, methods to estimate $\kappa_2(\mathbf{A})$ usually rely on iterative eigenvalue algorithms like the power method and inverse iteration~\cite{saad2011single_vector_iteration} or the Lanczos algorithm~\cite{saad2011hermitian_lanczos} applied to an implicitly constructed $\mathbf{A}^\top \mathbf{A}$ matrix. The eigenvalues of $\mathbf{A}^\top \mathbf{A}$ correspond to the squares of the singular values of $\mathbf{A}$. Once the extremal eigenvalues of $\mathbf{A}^\top \mathbf{A}$ are known, singular values are computed as $\sqrt{\lambda_{\max}}$ and $\sqrt{\lambda_{\min}}$, where $\lambda_{\max}$ and $\lambda_{\min}$ are the largest and smallest eigenvalues in magnitude. However, these methods struggle with convergence for very ill-conditioned matrices, which is precisely the case critical for our benchmarks.

In this section we propose a novel method leveraging an iterative projected gradient descent algorithm, accelerated by momentum and inspired by the Adam optimizer~\cite{adam}, very commonly used in machine learning applications. The method is used to accurately compute the $\mathcal{L}_2$ norms of $\mathbf{A}$ and $\mathbf{A}^{-1}$, which are then multiplied to form $\kappa_2(\mathbf{A})$. Similarly to Hager's algorithm, in order to compute $\|\mathbf{A}^{-1}\|$ the method requires a single sparse LU factorization of $\mathbf{A}$. However, In stark contrast to Hager's method, our proposed algorithm aims to solve differentiable minimisation and maximisation problems by relaxing the constraints on the norm of the solution. Convergence is then accelerated by projecting the sequence of approximate solutions in order to implicitly enforce the constraints.
Our preliminary tests indicate that the method is suitable for large-scale and severely ill-conditioned sparse systems. Moreover, contrarily to the power and inverse iteration methods, our technique was able to accurately compute condition numbers in the order of $10^{26}$. In this section, we present our algorithm and apply it to the benchmark problem considered in this work. Further testing and numerical experiments will be the subject of an upcoming paper.

\subsection{Problem Formulation}

\noindent While traditional algorithms approximate $\kappa_2(\mathbf{A})$ by reformulating the problem in terms of computing the extremal singular values of $\mathbf{A}$, we propose to tackle an alternative definition. The $\mathcal{L}_2$ operator-induced norm is defined as~\cite{matrix_cookbook}:
\begin{equation}
\label{eq:op_norm_def}
\|\mathbf{A}\|_2 = \sup_{\|\mathbf{v}\|_2 = 1} \|\mathbf{A}\mathbf{v}\|_2
\end{equation}
This definition is not very useful right-away, however it is possible to recast equation~\ref{eq:op_norm_def} to a more tractable form.

\begin{theorem}[Equivalent definitions of the $\mathcal{L}_2$ matrix norm]
\label{th:op_norm}
Let $\mathbf{A}$ be a matrix in $\mathbb{R}^{n \times n}$, and let $\|\cdot\|_2$ denote the  $\mathcal{L}_2$ norm (Euclidean norm) in $\mathbb{R}^n$. The operator-induced $\mathcal{L}_2$ norm is given by both
\begin{equation}
\label{eq:2_norm_def_1}
\|\mathbf{A}\|_2 = \sup_{\|\mathbf{v}\|_2 = 1} \|\mathbf{A}\mathbf{v}\|_2
\end{equation}
and
\begin{equation}
\label{eq:2_norm_def_2}
\|\mathbf{A}\|_2 = \sup_{\mathbf{v} \neq \mathbf{0}} \frac{\|\mathbf{A}\mathbf{v}\|_2}{\|\mathbf{v}\|_2}
\end{equation}
These two definitions are equivalent.
\end{theorem}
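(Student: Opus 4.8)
The plan is to show that the two suprema in equations~\ref{eq:2_norm_def_1} and~\ref{eq:2_norm_def_2} are taken over exactly the same set of nonnegative real numbers, so that they must coincide. The bridge between the two formulations is the positive homogeneity of the Euclidean norm, namely $\|\alpha\mathbf{w}\|_2 = |\alpha|\,\|\mathbf{w}\|_2$ for any scalar $\alpha$ and vector $\mathbf{w}$, which I will invoke with the positive scalar $\alpha = 1/\|\mathbf{v}\|_2$. Together with the linearity of $\mathbf{A}$, this lets me convert any quotient $\|\mathbf{A}\mathbf{v}\|_2/\|\mathbf{v}\|_2$ into the value of $\|\mathbf{A}\mathbf{u}\|_2$ at a suitable unit vector $\mathbf{u}$, and vice versa.

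Concretely, I would define the two value sets $S_1 = \{\|\mathbf{A}\mathbf{u}\|_2 : \|\mathbf{u}\|_2 = 1\}$ and $S_2 = \{\|\mathbf{A}\mathbf{v}\|_2/\|\mathbf{v}\|_2 : \mathbf{v} \neq \mathbf{0}\}$ and establish $S_1 = S_2$ by two inclusions. The inclusion $S_1 \subseteq S_2$ is immediate: every unit vector is in particular nonzero, and for such a vector the quotient in equation~\ref{eq:2_norm_def_2} reduces to $\|\mathbf{A}\mathbf{u}\|_2 / 1 = \|\mathbf{A}\mathbf{u}\|_2$. For the reverse inclusion $S_2 \subseteq S_1$, given any $\mathbf{v}\neq\mathbf{0}$ I would set $\mathbf{u} = \mathbf{v}/\|\mathbf{v}\|_2$, which is a genuine unit vector, and use homogeneity to compute
\begin{equation*}
\|\mathbf{A}\mathbf{u}\|_2 = \left\|\mathbf{A}\frac{\mathbf{v}}{\|\mathbf{v}\|_2}\right\|_2 = \frac{1}{\|\mathbf{v}\|_2}\|\mathbf{A}\mathbf{v}\|_2 = \frac{\|\mathbf{A}\mathbf{v}\|_2}{\|\mathbf{v}\|_2},
\end{equation*}
exhibiting each element of $S_2$ as an element of $S_1$. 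Since the two sets then coincide, their suprema coincide, which is precisely the claimed equality.

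There is no genuine analytic obstacle here — no limiting or compactness argument is required — since the equivalence is purely set-theoretic once homogeneity is invoked. The only points demanding care are bookkeeping: I must note that $\|\mathbf{v}\|_2 > 0$ precisely because $\mathbf{v}\neq\mathbf{0}$, so that forming $\mathbf{u} = \mathbf{v}/\|\mathbf{v}\|_2$ and dividing by $\|\mathbf{v}\|_2$ are well defined, and that pulling the positive scalar $1/\|\mathbf{v}\|_2$ out of $\|\mathbf{A}(\cdot)\|_2$ is justified jointly by the linearity of $\mathbf{A}$ and the homogeneity of the norm. I may additionally remark, though it is not needed for the equality, that both suprema are in fact attained because the unit sphere in $\mathbb{R}^n$ is compact and $\mathbf{v}\mapsto\|\mathbf{A}\mathbf{v}\|_2$ is continuous; this guarantees the common value is finite and furnishes the maximizing direction later exploited by the Projected Adam method.
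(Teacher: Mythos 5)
Your proposal is correct and follows essentially the same route as the paper's proof: the two set inclusions $S_1 \subseteq S_2$ and $S_2 \subseteq S_1$ correspond exactly to the paper's two supremum inequalities, both hinging on the same normalization $\mathbf{u} = \mathbf{v}/\|\mathbf{v}\|_2$ and the homogeneity of the norm. Your packaging as an equality of value sets is a slightly cleaner statement of the same argument, and your remarks on well-definedness and attainment of the supremum match the discussion the paper gives immediately after the theorem.
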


\begin{proof}
We will prove that the two definitions are equivalent by showing that:
\begin{equation}
\label{eq:sup_equality}
\sup_{\|\mathbf{v}\|_2 = 1} \|\mathbf{A}\mathbf{v}\|_2 = \sup_{\mathbf{v} \neq \mathbf{0}} \frac{\|\mathbf{A}\mathbf{v}\|_2}{\|\mathbf{v}\|_2}
\end{equation}
\textbf{Show that} $\sup_{\|\mathbf{v}\|_2 = 1} \|\mathbf{A}\mathbf{v}\|_2 \leq \sup_{\mathbf{v} \neq \mathbf{0}} \frac{\|\mathbf{A}\mathbf{v}\|_2}{\|\mathbf{v}\|_2}$:

\vspace{0.1cm}
\noindent For any vector $\mathbf{v}$ such that $\|\mathbf{v}\|_2 = 1$, we have
\begin{equation*}
\frac{\|\mathbf{A}\mathbf{v}\|_2}{\|\mathbf{v}\|_2} = \|\mathbf{A}\mathbf{v}\|_2,
\end{equation*}
Therefore:
\begin{equation*}
\|\mathbf{A}\mathbf{v}\|_2 \leq \sup_{\mathbf{v} \neq \mathbf{0}} \frac{\|\mathbf{A}\mathbf{v}\|_2}{\|\mathbf{v}\|_2} \quad \text{for all } \mathbf{v} \text{ with } \|\mathbf{v}\|_2 = 1
\end{equation*}
Taking the supremum over all $\mathbf{v}$ with $\|\mathbf{v}\|_2 = 1$, we get
\begin{equation}
\label{eq:sup_inequality_1}
\sup_{\|\mathbf{v}\|_2 = 1} \|\mathbf{A}\mathbf{v}\|_2 \leq \sup_{\mathbf{v} \neq \mathbf{0}} \frac{\|\mathbf{A}\mathbf{v}\|_2}{\|\mathbf{v}\|_2}
\end{equation}
\textbf{Show that} $\sup_{\mathbf{v} \neq \mathbf{0}} \frac{\|\mathbf{A}\mathbf{v}\|_2}{\|\mathbf{v}\|_2} \leq \sup_{\|\mathbf{v}\|_2 = 1} \|\mathbf{A}\mathbf{v}\|_2$:

\vspace{0.1cm}
\noindent Consider any nonzero vector $\mathbf{v} \in \mathbb{R}^n$. Define $\mathbf{u} = \frac{\mathbf{v}}{\|\mathbf{v}\|_2}$, so that $\|\mathbf{u}\|_2 = 1$. Then:
\begin{equation}
\label{eq:sup_v_equivalence}
\frac{\|\mathbf{A}\mathbf{v}\|_2}{\|\mathbf{v}\|_2} = \|\mathbf{A}\left(\frac{\mathbf{v}}{\|\mathbf{v}\|_2}\right)\|_2 = \|\mathbf{A}\mathbf{u}\|_2
\end{equation}
Thus:
\begin{equation*}
\frac{\|\mathbf{A}\mathbf{v}\|_2}{\|\mathbf{v}\|_2} = \|\mathbf{A}\mathbf{u}\|_2,
\end{equation*}
where $\|\mathbf{u}\|_2 = 1$. Therefore:
\begin{equation*}
\frac{\|\mathbf{A}\mathbf{v}\|_2}{\|\mathbf{v}\|_2} \leq \sup_{\|\mathbf{u}\|_2=1} \|\mathbf{A}\mathbf{u}\|_2
\end{equation*}
Taking the supremum over all $\mathbf{v} \neq \mathbf{0}$ and renaming $\mathbf{u}$ to $\mathbf{v}$, we get:
\begin{equation}
\label{eq:sup_inequality_2}
\sup_{\mathbf{v} \neq \mathbf{0}} \frac{\|\mathbf{A}\mathbf{v}\|_2}{\|\mathbf{v}\|_2} \leq \sup_{\|\mathbf{v}\|_2=1} \|\mathbf{A}\mathbf{v}\|_2
\end{equation}
Since we have proven that both inequalities~\ref{eq:sup_inequality_1} and~\ref{eq:sup_inequality_2} hold, then it must also be that the equation~\ref{eq:sup_equality} holds as well. \qedsymbol
\end{proof}
Theorem~\ref{th:op_norm} gives a definition of $\|\mathbf{A}\|_2$ that is not explicitly subject to the constraint $\|\mathbf{v}\|_2 = 1$. Instead, this constraint is implicitly encoded in the denominator of the expression $\frac{\|\mathbf{A} \mathbf{v}\|_2}{\|\mathbf{v}\|_2}$. Since $\|\mathbf{A}\mathbf{v}\|_2$ is a continuous function and the set of unit vectors (the surface of the unit ball, \mbox{$B^n = \{\mathbf{v} \in \mathbb{R}^n \mid \|\mathbf{v}\|_2 = 1\}$}) is compact, the Extreme Value Theorem guarantees that $\|\mathbf{A}\mathbf{v}\|_2$ will attain its maximum on the surface of the unit ball. Moreover, Equation~\ref{eq:sup_v_equivalence} shows that $\frac{\|\mathbf{A}\mathbf{v}\|_2}{\|\mathbf{v}\|_2}$ is homogeneous, meaning that the vector $\mathbf{v}$ which maximizes $\|\mathbf{A}\mathbf{v}\|_2$ under the constraint $\|\mathbf{v}\|_2 = 1$ also attains the supremum for $\frac{\|\mathbf{A}\mathbf{v}\|_2}{\|\mathbf{v}\|_2}$ over all nonzero vectors $\mathbf{v}$. This allows us to formulate the following equivalence:
\begin{align}
\label{eq:max_sup_equivalence}
\begin{split}
    \|\mathbf{A}\|_2 &= \sup_{\|\mathbf{v}\|_2=1}\|\mathbf{A} \mathbf{v}\|_2 = \max_{\|\mathbf{v}\|_2=1}\|\mathbf{A} \mathbf{v}\|_2 =\\
    &= \sup_{\mathbf{v} \neq \mathbf{0}} \frac{\|\mathbf{A} \mathbf{v}\|_2}{\|\mathbf{v}\|_2} = \max_{\mathbf{v} \neq \mathbf{0}} \frac{\|\mathbf{A} \mathbf{v}\|_2}{\|\mathbf{v}\|_2}
\end{split}
\end{align}
Thus, the matrix norm $\|\mathbf{A}\|_2$ is equal to the maximum $\max_{\mathbf{v} \neq \mathbf{0}} \frac{\|\mathbf{A} \mathbf{v}\|_2}{\|\mathbf{v}\|_2}$, and this maximum is attained at the same vector that maximizes $\|\mathbf{A} \mathbf{v}\|_2$ on the unit sphere.
\subsection{Loss Function and Optimization}
Equation~\ref{eq:max_sup_equivalence} allows us to recast the computation of $\|\mathbf{A}\|_2$ as a maximization (minimization) problem of a continuous function. In the field of machine learning, these problems are solved on regular basis and over the years powerful and versatile optimizers have been developed. We propose to modify the Adam optimizer in order to inexpensively compute $\|\mathbf{A}\|_2$. To simplify the process, we aim to compute the vector $\mathbf{v}^*$ for which $\|\mathbf{A}\|_2 = \|\mathbf{Av}^*\|_2$. This allows us to leverage the monotonicity of the $\log$ function, further simplifying the problem:
\begin{align}
\begin{split}
    \mathbf{v}^* &= \arg \max_{\mathbf{v} \neq \mathbf{0}} \frac{\|\mathbf{A} \mathbf{v}\|_2}{\|\mathbf{v}\|_2} = \arg \max_{\mathbf{v} \neq \mathbf{0}} \log \left( \frac{\|\mathbf{A} \mathbf{v}\|_2}{\|\mathbf{v}\|_2}\right) \\ 
    &= \arg \min_{\mathbf{v} \neq \mathbf{0}} \log \left(\|\mathbf{v}\|_2\right) - \log\left( \|\mathbf{A} \mathbf{v}\|_2\right) 
\end{split}
\end{align}
We define the loss function $L$ as:
\begin{equation}
L(\mathbf{v}) = \log \left(\|\mathbf{v}\|_2\right) - \log\left( \|\mathbf{A} \mathbf{v}\|_2\right)     
\end{equation}
Notice that the gradient $\nabla L$ is readily available:
\begin{align}
\label{eq:nabla_loss}
\nabla L(\mathbf{v}) =  \frac{\mathbf{v}}{\|\mathbf{v}\|_2^2} -   \frac{\mathbf{A}^\top\mathbf{Av}}{\|\mathbf{Av}\|_2^2}
\end{align}
\begin{algorithm}[h!]
\SetAlgoLined
\KwIn{$L$, $\nabla L$, $A$, $\mathbf{x}_{\text{init}}$, $\alpha$, $\beta_1$, $\beta_2$, $\epsilon$, $\operatorname{max\_iter}$}
\KwOut{$\mathbf{x}_{\operatorname{min}}$ approximate minimizer of $L$}

$\mathbf{x} \gets \mathbf{x}_{\text{init}}$\;
$\mathbf{m} \gets \mathbf{0}$\;
$\mathbf{v} \gets \mathbf{0}$\;
$L_{\operatorname{min}} \gets \infty$\;
$\mathbf{x}_{\operatorname{min}} \gets \mathbf{x}$\;\

\For{$t \gets 1$ \textbf{to} $\operatorname{max\_iter}$}{

    \tcp{Compute gradient of the loss}
    $\mathbf{g} \gets \nabla L(\mathbf{x})$\;\
    
    \tcp{Update biased moments}
    $\mathbf{m} \gets \beta_1 \cdot \mathbf{m} + (1 - \beta_1) \cdot \mathbf{g}$\;
    $\mathbf{v} \gets \beta_2 \cdot \mathbf{v} + (1 - \beta_2) \cdot \mathbf{g}^2$\;\
    
    \tcp{Compute corrected moments}
    $\hat{\mathbf{m}} \gets \mathbf{m} / (1 - \beta_1^t)$\;
    $\hat{\mathbf{v}} \gets \mathbf{v} / (1 - \beta_2^t)$\;\
    
    \tcp{Compute update to the solution}
    $\mathbf{\Delta x} \gets \alpha \cdot \hat{\mathbf{m}} / (\sqrt{\hat{\mathbf{v}}} + \epsilon)$\;\
    
    \tcp{Project update to be tangent to the unit sphere}
    $\mathbf{\Delta x} \gets \mathbf{\Delta x} - (\mathbf{x}^\top \mathbf{\Delta x}) \cdot \mathbf{x}$\;\
    
    \tcp{Update solution}
    $\mathbf{x} \gets \mathbf{x} - \mathbf{\Delta x}$\;\
    
    \tcp{Normalize x to project back to the unit sphere}
    $\mathbf{x} \gets \mathbf{x} / \|\mathbf{x}\|$\;\
    
    \tcp{Optionally track loss}
    \If{$L(\mathbf{x}) <L_{\operatorname{min}}$}{
        $L_{\operatorname{min}} \gets L(\mathbf{x})$\;
        $\mathbf{x}_{\operatorname{min}} \gets \mathbf{x}$\;
    }
}

\Return $\mathbf{x}_{\operatorname{min}}$\;
\caption{Projected Adam Optimizer}
\label{alg:p_adam}
\end{algorithm}

In principle, it is possible to employ vanilla Adam to minimize $L$, yet in our experience this leads to slow convergence. Instead, we propose to leverage the information about the location of the maximiser (i.e. constraint $\|\mathbf{v}\|_2 = 1$ set by the definition of equation~\ref{eq:2_norm_def_1}) in order to decrease the number of iterations required. This can be done by implementing two small operations, yielding the projected Adam algorithm:
\begin{enumerate}
    \item{After the update $\mathbf{\Delta x}$ is computed, it is orthogonalized against the current approximation $\mathbf{x}$ of $\mathbf{v}^*$ to ensure that it is tangent to the surface of the unit sphere.}
    \item{After $\mathbf{x}$ is updated, it is projected back onto the surface of the unit sphere in order to ensure that it lies in the lower-dimensional domain where the solution is guaranteed to be. This is done by normalizing the vector to unit length.}
\end{enumerate}
The full Projected Adam algorithm is detailed in snippet~\ref{alg:p_adam}

\FloatBarrier
Computing the condition number $\kappa_2(\mathbf{A})$ requires both $\|\mathbf{A}\|_2$ as well as $\|\mathbf{A}^{-1}\|_2$. The latter can be computed via the Projected Adam method, however calculating the gradient $\nabla L$ explicitly via the definition given in equation~\ref{eq:nabla_loss} becomes prohibitively expensive. Luckily it is possible to compute the gradient implicitly by leveraging a single LU factorization $\mathbf{A} = \mathbf{LU}$. Once the $\mathbf{L}$ and $\mathbf{U}$ factors are known, it is possible to use them in order to compute $\nabla L$ as detailed in snippet~\ref{alg:computing_nabla_L}.
\begin{algorithm}[ht]
\SetAlgoLined
\KwIn{$\mathbf{x}$, $\mathbf{L}$, $\mathbf{U}$}
\KwOut{$\nabla L(\mathbf{x})$}

\tcp{Compute $\mathbf{r} = \mathbf{A}^{-1} \mathbf{x}$}
Solve $\mathbf{L}\mathbf{U}\mathbf{r} = \mathbf{x}$\;
\ \ \

\tcp{Compute $\|\mathbf{r}\| = \|\mathbf{A}^{-1} \mathbf{x}\|$}
$\operatorname{norm\_r} \gets \|\mathbf{r}\|$\;
\ \ \

\tcp{Compute $\mathbf{s} = \mathbf{A}^{-T}\mathbf{A}^{-1} \mathbf{x}$}
Solve $\mathbf{U}^\top \mathbf{L}^\top \mathbf{s} = \mathbf{r}$\;
\ \ \

\tcp{Compute $\|\mathbf{x}\|$}
$\operatorname{norm\_x} \gets \|\mathbf{x}\|$\;
\ \ \

\tcp{Compute the gradient $\nabla L(\mathbf{x})$}
$\nabla L(\mathbf{x}) \gets \frac{\mathbf{x}}{\operatorname{norm\_x}^2} - \frac{\mathbf{s}}{\operatorname{norm\_r}^2}$\;

\vspace{0.2cm} 
\Return $\nabla L(\mathbf{x})$\;
\caption{Compute Gradient $\nabla L(\mathbf{x})$ for $\mathbf{A}^{-1}$}
\label{alg:computing_nabla_L}
\end{algorithm}

\section{Benchmark Numerical Model}
In this section, we present the numerical model used as the benchmark to evaluate the performance of the different sparse linear solvers.
We begin by defining the governing equations of the system, detailing the PDEs that describe the physical phenomena being modeled. These equations provide the mathematical framework that encodes the problem. Next, we discuss the discretization strategy from which the benchmark numerical data is generated. Finally, we examine the structure and conditioning of the resulting linear systems, documenting the preconditioning strategy used in order to compute accurate solutions.
\subsection{Governing Equations}
\label{ch:governing_equations}
The system is derived from the numerical simulation recently proposed by Gerya in~\cite{Gerya2024}. The following conservation equations are solved in a 2D numerical domain representing a rectangular section of the lithosphere, sized $500 \operatorname{m} \times 50\operatorname{m}$:

\noindent \textbf{Conservation of mass for solid:}
\begin{equation}
\begin{aligned} \operatorname{div}\left(\vec{v}_{\text {solid }}\right) & +\beta_d\left(\frac{D^s P_{\text {total }}}{D t}-\alpha \frac{D^f P_{\text {fluid }}}{D t}\right) \\ & +\frac{P_{\text {total }}-P_{\text {fluid }}}{(1-\phi) \eta_\phi}=0\end{aligned}
\end{equation}

\noindent \textbf{Conservation of mass for fluid:}
\begin{equation}
\begin{aligned}\operatorname{div}\left(\vec{q}_{\text {Darcy }}\right) & -\beta_d \alpha\left(\frac{D^s P_{\text {total }}}{D t}-\frac{1}{B} \frac{D^f P_{\text {fluid }}}{D t}\right) \\ &-\frac{P_{\text {total }}-P_{\text {fluid }}}{(1-\phi) \eta_\phi}=0\end{aligned}
\end{equation}

\noindent \textbf{Momentum conservation for bulk material:}
\begin{equation}
\begin{aligned}\frac{\partial \tau_{i j}}{\partial x_j}-\frac{\partial P_{\text {total }}}{\partial x_i}&=(1-\phi) \rho_{\text {solid }}\left(\frac{D^s V_{i(\text { solid })}}{D t}-g_i\right) \\ &+\phi \rho_{\text {fluid }}\left(\frac{D^f V_{i(\text { fluid })}}{D t}-g_i\right)\end{aligned}
\end{equation}

\noindent \textbf{Momentum conservation for fluid:}
\begin{equation}
\begin{aligned}
& q_{i(\text { Darcy })}=-\frac{k}{\eta_{\text {fluid }}} \\
&\times\left(\frac{\partial P_{\text {fluid }}}{\partial x_i}-\rho_{\text {fluid }} g_i+\rho_{\text {fluid }} \frac{D^f V_{i(\text { fluid })}}{D t}\right)
\end{aligned}
\end{equation}
For a detailed discussion of each term and its physical interpretation, please refer to the original publication~\cite{Gerya2024}.

\subsection{Discretization Strategy}
To accurately solve the equations in section~\ref{ch:governing_equations} and minimize numerical diffusion, a hybrid Eulerian-Lagrangian advection scheme known as the \textit{marker-in-cell} method is employed. This technique uses Lagrangian markers (particles) to represent physical quantities that need to be advected: while these markers move through a fixed Eulerian grid, they carry properties such as mass, velocity, and rock composition. At each timestep, these quantities are interpolated from the markers to the staggered nodal points of the Eulerian grid and vice versa, using a bilinear interpolation method. As the simulation progresses, markers are advected by the local velocity field, and the Eulerian grid solves the governing equations. This hybrid approach reduces numerical diffusion and improves the simulation's accuracy. Additionally, the marker-in-cell method effectively handles complex boundary conditions and heterogeneous material properties, common in geophysical simulations. A more thorough discussion of this method is available in~\cite{Gerya_chapter8}. In the case of this specific experiment, each timestep iteration requires the solution of at least one sparse linear system in order to account for hydro-mechanical coupling. We will use these linear systems as benchmark. All details concerning  the implementation of this specific experiment are available in~\cite{zilio_hydro_mechanical, zilio_subduction_earthquake}.

\subsection{Structure and Conditioning of the Problem}
\label{ch:structure_and_conditioning}
This section details the structure of the linear systems solved as benchmark. In total, 35 iterations are solved. The first 5 are considered stabilization timesteps as the linear system representing the initial conditions is particularly difficult to solve and only some numerical libraries yield correct solutions. During the stabilization period, the computed solution is evaluated and then discarded in favour of the one computed by Eigen's SparseLU algorithm, which produced reliable results. Moreover, no runtimes are recorded and only the remaining 30 timesteps are used for performance evaluation. 

\subsubsection{Sparsity Structure of the System}
The discretization scheme described in the previous sections gives rise to a linear systems of size $360960\times 360960$ with $2575716$ non-zero elements. The sparsity structure is identical between benchmark iterations, so it is possible to always reuse the symbolic factorization computed for the first benchmark iteration. The only exception is the initial-conditions system, which has a slightly different structures and features $2575711$ non-zero elements.
Figure~\ref{fig:sparsity_structure} showcases the sparsity structure of the benchmark matrices, which display a tri-banded structure. Note that the visualization is not to scale with respect to the true size of the matrix.

\begin{figure*}[h!]
    \centering
    \includegraphics[width=0.8\textwidth]{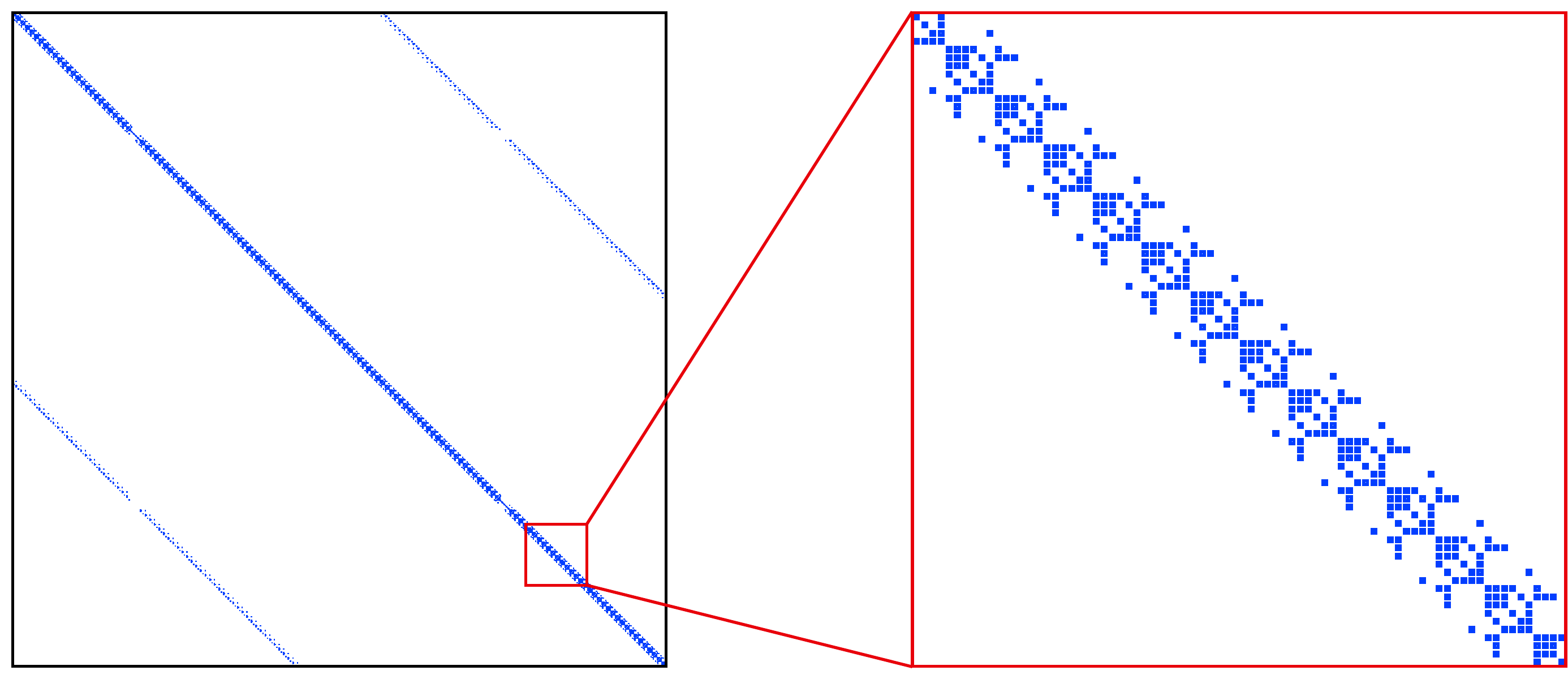}
    \caption{Visualisation of the sparsity structure of the matrix solved during the benchmark iterations. The image on the left shows a section of the matrix detailing the overall tri-banded structure, while the image on the right provides a detailed view of the main diagonal band.}
    \label{fig:sparsity_structure}
    \vspace{0.5cm}
    \includegraphics[width=\textwidth]{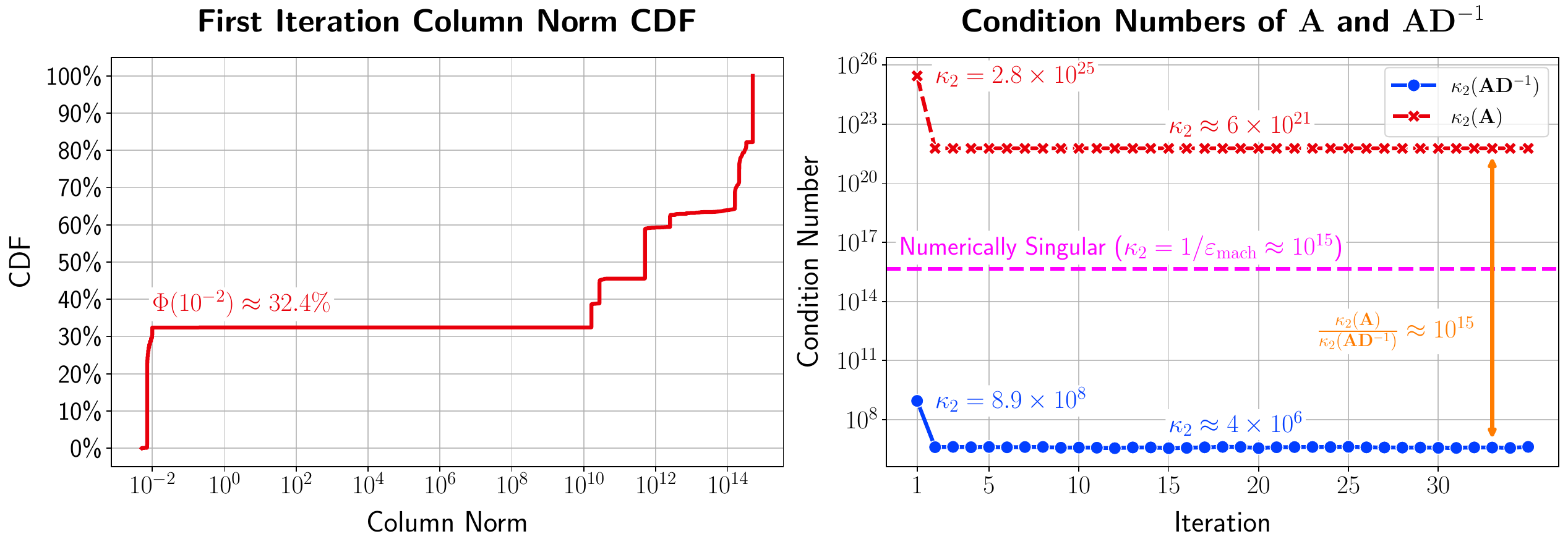}
    \caption{\textbf{Left} – Cumulative Distribution Function (CDF) $\mathbf{\Phi}$ describing the distribution of the column norms of the first benchmark iteration. $32.4\%$ of the columns have norm smaller than $10^{-2}$, while the remaining $67.6\%$ greater than $10^{10}$.}
    \label{fig:col_norm_cdf}
    
    \caption{\textbf{Right} – Condition number for original matrix $\mathbf{A}$ and right-preconditioned matrix $\mathbf{AD}^{-1}$ for every iteration. The original SLSs have conditioning $\kappa_2(\mathbf{A})\geq 1/\varepsilon_{\mathrm{mach}}$ meaning that they are numerically singular according to Theorem \ref{th:numerical_singularity}. The preconditioned systems have condition numbers $\kappa_2(\mathbf{AD}^{-1})\approx 10^6$ meaning that, although they are severely ill-conditioned, they are not numerically singular. Simple column scaling was enough to improve the condition number by a factor of $10^{15}$.}
    \label{fig:iteration_condition_numbers}
\end{figure*}

\subsubsection{Condition Number and Preconditioning}
The benchmark linear systems present a significant challenge due to the highly imbalanced column norms of the coefficients matrix. As detailed in Figure~\ref{fig:col_norm_cdf}, approximately $33\%$ of the columns have norms smaller than $10^{-2}$, while the remaining $67\%$ exhibit norms greater than $10^{10}$. This extreme imbalance leads to an extremely ill-conditioned matrix, with condition number in the order of $10^{21}$, making it numerically singular according to Theorem~\ref{th:numerical_singularity}. A detailed analysis using the rank-revealing feature of the SPQR sparse QR factorization indicates that the matrix has an effective rank of approximately $\operatorname{rank}(\mathbf{A})=243960$, which corresponds precisely to $67\%$ of the total number of columns. 
This result suggests that the $33\%$ of columns with small norms contribute very little to the solution space and instead lie numerically in the kernel of the matrix. Column scaling is thus the natural preconditioning strategy for this problem. Figure \ref{fig:iteration_condition_numbers} shows that the condition number of the preconditioned linear systems drops significantly from the initial $10^{21}$ to around $10^{6}$. Although the system remains ill-conditioned, it is no longer numerically singular, as confirmed by the SPQR QR factorization, which shows that the preconditioned matrices are full rank. The improvement provided by simple column scaling is substantial, reducing the condition number by approximately 15 orders of magnitude. Nevertheless, the initial conditions SLSs remains particularly challenging and some solvers still yield unreliable solutions. 

\section{Benchmark Results}
In this section, we present the results of the benchmark experiments conducted to evaluate the performance and accuracy of various sparse linear solvers. The results are organized into two subsections. We begin by detailing the experimental setup, describing the software and hardware configurations used for the benchmarks to ensure that the results are reproducible and relevant for practical applications. Next, we summarize the key findings from the experiments, comparing the accuracy and performance of the different solvers. This analysis provides a comprehensive evaluation of the solvers, focusing on both their accuracy and overall computational efficiency.

\subsection{Experimental Setup}
The benchmarks were conducted on dedicated nodes equipped with high-performance hardware. Each node is powered by dual AMD EPYC 7773X 64-Core CPUs, providing a total of 128 CPU cores, and 1TB of memory. Additionally, each node is equipped with four NVIDIA A100 80GB PCIe GPUs, which were utilized in the GPU-specific performance tests. All solver libraries were wrapped and invoked from C++ using the Eigen library, which provides a flexible and efficient interface for linear algebra operations. The codebase was compiled using GCC 13.1.0, with maximum safe math optimizations enabled to ensure both performance and numerical accuracy. The tested libraries were compiled using the Spack package manager~\cite{spack} to guarantee a consistent and reproducible build environment. The only exception is PaStiX, which was compiled manually. Multithreading was handled differently depending on the requirements of the library in use, either with OpenMP, Intel TBB, or POSIX Threads.

\subsection{Performance and Accuracy Results}
In this section, we present the performance and accuracy results obtained from testing a total of 16 solvers across 8 different libraries. The solvers were divided into two categories: 5 iterative solvers and 11 direct solvers (9 LU-based and 2 QR-based).

\subsubsection{Iterative Solvers} The benchmark problem posed a significant challenge for iterative solvers due to its extremely ill-conditioned nature. Each solver was configured with a residual tolerance of $10^{-7}$ and a maximum of 1000 iterations. We tested the solvers with no preconditioner as well as with diagonal and incomplete LU preconditioners, after applying column scaling.

\setlength{\extrarowheight}{5pt} 
\begin{table}[ht]
    \centering
    \begin{tabular}{lll}
    \toprule
    \textbf{Solver} & $\kappa(\mathbf{A})\frac{\|\mathbf{A \hat x} - \mathbf{b}\|}{\|\mathbf{A}\| \|\mathbf{\hat x}\|}$ & $\kappa(\mathbf{A_0})\frac{\|\mathbf{A_0 \hat x_0} - \mathbf{b_0}\|}{\|\mathbf{A_0}\| \|\mathbf{\hat x_0}\|}$ \\
    \midrule
    \textbf{BICGSTAB} & nan & 2.9e+10 \\
    \textbf{DGMRES} & 4.2e+02 & 2.9e+10 \\
    \textbf{GMRES} & 4.1e+02 & 1.2e+08 \\
    \textbf{IDR($s$)} & 4.9e+04 & 3.8e+10 \\
    \textbf{IDR($s$)TAB($\ell$)} & 2.6e+02 & 9.3e+07 \\
    \bottomrule
    \end{tabular}
    \caption{Accuracy results of different iterative solvers. The column $\kappa(\mathbf{A})\frac{\|\mathbf{A \hat x} - \mathbf{b}\|}{\|\mathbf{A}\| \|\mathbf{\hat x}\|}$ corresponds to the tight upper bound on the relative error of the first benchmark iteration, while $\kappa(\mathbf{A_0})\frac{\|\mathbf{A_0 \hat x_0} - \mathbf{b_0}\|}{\|\mathbf{A_0}\| \|\mathbf{\hat x_0}\|}$ refers to the tight upper bound on the relative error when solving the initial conditions system.}
    \label{tab:iterative_solver_accuracy}
\end{table}

Unfortunately, none of the iterative solvers were able to produce correct results, even with the most sophisticated preconditioning techniques. Despite their superior parallel performance, this advantage is moot if the solvers fail to converge to an accurate solution. The inability to handle the problem effectively underscores the difficulty of the benchmark. Table \ref{tab:iterative_solver_accuracy} reports the error for the first benchmark iteration as well as for the initial conditions system, highlighting the shortcomings of iterative methods in this context.

\subsubsection{Direct Solvers}
In contrast to iterative methods, all direct solvers provided reliable solutions across the benchmark iterations, despite the challenging nature of the problem. However, the initial conditions system proved particularly difficult, and some solvers struggled to produce accurate solutions under these conditions. Table \ref{tab:direct_full_benchmark} presents the performance and accuracy results for the full benchmark. These results include both symbolic and numerical factorizations, which were recomputed at each iteration in order to test the performance capabilities of each solver under full workload conditions.

The results presented in Table \ref{tab:direct_full_benchmark} provide an overview of the performance and accuracy of the evaluated direct solvers. The table includes the optimal core count, total time to solution across 30 benchmark iterations, and the median time per iteration, which includes the time for symbolic factorization, numerical factorization, and solution. Additionally, it reports lower and upper error bounds according to Theorem \ref{th:error_x}, tight lower and upper error bounds according to Theorem \ref{th:err_hatx}, along with a tight upper error bound for the initial conditions linear system. Bootstrapped $95\%$ confidence intervals are shown for time-to-solution and median iteration runtime when these intervals exceed $1\%$. The reported relative error bounds correspond to the critical iteration, defined as the iteration with the highest tight upper error bound. In terms of performance, oneMKL PARDISO had the best time to solution and median time per iteration, followed closely by UMFPACK, KLU, and MUMPS. These solvers were the fastest overall, making them well-suited for performance-critical applications. On the accuracy front, SuperLU and SuperLU-dist delivered the most precise results for the benchmark iterations. However, for the initial conditions system, UMFPACK provided the most accurate solution. It’s worth noting that only 8 out of the 11 direct solvers managed to solve the initial conditions linear system, highlighting the difficulty of this particular problem. Generally, LU-based solvers were faster than QR-based solvers: this was expected given the added complexity of computing an orthogonal matrix $\mathbf{Q}$. However, this was not always the case: cuSolver's LU implementation, for example, was about $2.5 \times$ slower than SuiteSparse's SPQR solver, indicating significant variability in performance depending on the quality of an implementation. None of the solvers were able to fully utilize the 128 CPU cores available. The best parallel performance was seen with Intel oneMKL PARDISO, which scaled up to 32 CPU cores, but even then, it only achieved a performance increase of about $2\times$ compared to using a single core. All other solvers showed even less improvement, with some experiencing diminished performance as the core count increased. The poor parallel performance can be attributed to several factors, including the size of the matrix, its sparsity structure, the number of non-zero elements, and the specific implementation details of each solver. It is likely that other problem structures or sizes could benefit more from parallelism than the specific problem benchmarked in this work. For GPU performance, the cuSolver QR GPU implementation was about $5.5 \times$ faster than its CPU counterpart, yet it still lagged behind SuiteSparse's SPQR solver running on 16 CPU cores. Interestingly, GPU support in SuperLU-dist did not result in faster performance and was slightly slower than the CPU-only run, indicating that GPU acceleration does not always yield better results.

As discussed in Section \ref{ch:structure_and_conditioning}, the sparsity pattern of the benchmark matrices remains consistent across iterations. This allows for the re-use of the symbolic factorization computed during the first iteration for all subsequent iterations, potentially reducing computational overhead.
While the cost of symbolic factorization is generally lower than that of numerical factorization given that it has lower asymptotic complexity \cite{survey_direct_solvers}, the extent of this cost depends on the heuristics implemented by the specific solver. To evaluate the impact of re-using symbolic factorization, we reran the benchmarks with this optimization. Most solvers did not show a significant performance boost, with speedups of less than 10\%. Interestingly however, three solvers stood out with substantial improvements:
\begin{itemize}
    \item oneMKL PARDISO became approximately $7 \times$ faster.
    \item MUMPS saw a speedup of about $3.5 \times$.
    \item PaStiX achieved a $2 \times$ speedup.
\end{itemize}

These results make Intel oneMKL PARDISO and MUMPS particularly attractive, as they not only provided a solid performance in the full benchmark but also significantly benefited from the re-use of the symbolic factorization. Table \ref{tab:direct_fact_benchmark} presents the detailed performance and accuracy data for the solvers that exhibited more than a 10\% improvement when re-using symbolic factorization.

\onecolumn
\setlength{\extrarowheight}{9pt} 

\begin{sidewaystable}[ht]
\centering

\begin{tabular}{lrlllllll}
\toprule
\textbf{Solver} & \textbf{Cores} & \textbf{Time to Solution} $(s)$ & \textbf{Median Time} $(s)$ & $\frac{1}{\kappa(\mathbf{A})} \frac{\|\mathbf{A \hat x} - \mathbf{b} \|}{\|\mathbf{b}\|}$ & $\kappa(\mathbf{A}) \frac{\|\mathbf{A \hat x} - \mathbf{b} \|}{\|\mathbf{b}\|}$ & $\frac{\|\mathbf{A \hat x} - \mathbf{b}\|}{\|\mathbf{A}\| \|\mathbf{\hat x}\|}$ & $\kappa(\mathbf{A})\frac{\|\mathbf{A \hat x} - \mathbf{b}\|}{\|\mathbf{A}\| \|\mathbf{\hat x}\|}$ & $\kappa(\mathbf{A_0})\frac{\|\mathbf{A_0 \hat x_0} - \mathbf{b_0}\|}{\|\mathbf{A_0}\| \|\mathbf{\hat x_0}\|}$ \\
\midrule
\textbf{cuSolver LU} & $1^*$ & $698.16$ & $23.27$ & 4.0e-35 & 1.4e+09 & 6.8e-33 & 4.0e-11 \cmark & 2.4e-07 \cmark \\
\textbf{cuSolver QR} & $1^*$ & $4683.35$ & $156.11$ & 6.6e-35 & 2.3e+09 & 1.1e-32 & 6.6e-11 \cmark & 6.6e-07 \cmark \\
\textbf{cuSolver QR GPU} & $1^*$ & $825.35$ & $27.50$ & 3.3e-35 & 1.2e+09 & 5.7e-33 & 3.3e-11 \cmark & 1.4e-07 \cmark \\
\textbf{Eigen SparseLU} & $1^*$ & $154.15$ & $5.14$ & 8.6e-35 & 3.0e+09 & 1.5e-32 & 8.7e-11 \cmark & 4.1e-07 \cmark \\
\textbf{KLU} & $1^*$ & $87.05$ & $2.89$ & 1.3e-35 & 4.4e+08 & 2.2e-33 & 1.3e-11 \cmark & 1.1e-04 \xmark \\
\textbf{MUMPS} & 1 & $94.26$ & $3.14$ & 2.4e-35 & 8.2e+08 & 4.0e-33 & 2.4e-11 \cmark & 1.1e-06 \xmark \\
\textbf{oneMKL PARDISO} & 32 & $\mathbf{57.03}$ & $\mathbf{1.90^{ \ 1.00\%}}$ & 8.1e-31 & 2.8e+13 & 1.4e-28 & 8.2e-07 \cmark & 5.1e-02 \xmark \\
\textbf{PaStiX} & $1^*$ & $123.29^{ \ 3.02\%}_{ \ 2.13\%}$ & $4.00^{ \ 2.98\%}_{ \ 1.89\%}$ & 4.9e-34 & 1.7e+10 & 8.4e-32 & 5.0e-10 \cmark & 2.5e+00 \xmark \\
\textbf{SPQR} & 16 & $283.88_{ \ 1.08\%}$ & $9.53_{ \ 2.10\%}$ & 3.4e-35 & 1.2e+09 & 5.8e-33 & 3.4e-11 \cmark & 3.4e-04 \xmark \\
\textbf{SuperLU} & 1 & $150.65$ & $5.01$ & 5.1e-35 & 1.8e+09 & 8.8e-33 & 5.2e-11 \cmark & 3.4e-07 \cmark \\
\textbf{SuperLU dist} & 4 & $134.75$ & $4.47$ & 7.2e-37 & 2.5e+07 & 1.3e-33 & 7.6e-12 \cmark & 4.4e-08 \cmark \\
\textbf{SuperLU dist GPU} & 8 & $155.29^{ \ 1.59\%}_{ \ 1.45\%}$ & $5.10^{ \ 4.68\%}_{ \ 1.43\%}$ & \textbf{7.2e-37} & \textbf{2.5e+07} & \textbf{1.3e-33} & \textbf{7.6e-12 \cmark} & 4.4e-08 \cmark \\
\textbf{UMFPACK} & 1 & $65.87$ & $2.20$ & 7.7e-36 & 2.7e+08 & 1.3e-33 & 7.8e-12 \cmark & \textbf{7.7e-12 \cmark} \\
\bottomrule
\end{tabular}
\caption{Performance and accuracy results of evaluated sparse direct solvers. Each row presents the following data: optimal core count (* indicates no multithreading support or compiled without multithreading), total time to compute 30 benchmark iterations (including both symbolic and numerical factorization for each iteration), median iteration time, worst-case error for a benchmark linear system $\mathbf{Ax} = \mathbf{B}$, and tight error upper bound for the initial conditions system $\mathbf{A_0x_0} = \mathbf{B_0}$. Bootstrapped $95\%$ relative CIs are provided for total time and median iteration time when exceeding $1\%$. The values in bold represent the best recorded result.}
\label{tab:direct_full_benchmark}
\vspace{0.5cm}

\begin{tabular}{lrlllllll}
\toprule
\textbf{Solver} & \textbf{Cores} & \textbf{Time to Solution} $(s)$ & \textbf{Median Time} $(s)$ & $\frac{1}{\kappa(\mathbf{A})} \frac{\|\mathbf{A \hat x} - \mathbf{b} \|}{\|\mathbf{b}\|}$ & $\kappa(\mathbf{A}) \frac{\|\mathbf{A \hat x} - \mathbf{b} \|}{\|\mathbf{b}\|}$ & $\frac{\|\mathbf{A \hat x} - \mathbf{b}\|}{\|\mathbf{A}\| \|\mathbf{\hat x}\|}$ & $\kappa(\mathbf{A})\frac{\|\mathbf{A \hat x} - \mathbf{b}\|}{\|\mathbf{A}\| \|\mathbf{\hat x}\|}$ & $\kappa(\mathbf{A_0})\frac{\|\mathbf{A_0 \hat x_0} - \mathbf{b_0}\|}{\|\mathbf{A_0}\| \|\mathbf{\hat x_0}\|}$ \\
\midrule
\textbf{MUMPS} & 1 & $26.92$ & $0.90^{ \ 1.28\%}$ & \textbf{2.3e-35} & \textbf{8.1e+08} & \textbf{4.0e-33} & \textbf{2.3e-11 \cmark} & 1.1e-06 \xmark \\
\textbf{oneMKL PARDISO} & 32 & $\mathbf{8.09^{ \ 4.29\%}_{ \ 3.34\%}}$ & $\mathbf{0.26^{ \ 4.60\%}_{ \ 3.33\%}}$ & 5.8e-31 & 2.0e+13 & 9.9e-29 & 5.8e-07 \cmark & 5.1e-02 \xmark \\
\textbf{PaStiX} & 1 & $64.35^{ \ 6.76\%}_{ \ 4.86\%}$ & $1.98^{ \ 8.01\%}_{ \ 4.55\%}$ & 5.5e-34 & 1.9e+10 & 9.3e-32 & 5.5e-10 \cmark & 2.5e+00 \xmark \\
\bottomrule
\end{tabular}
\caption{Performance results with reused symbolic factorization. This table lists only the solvers that showed more than a $10\%$ performance improvement by reusing the symbolic factorization from the first benchmark iteration. Column definitions are the same as in table~\ref{tab:direct_full_benchmark}.}
\label{tab:direct_fact_benchmark}

\end{sidewaystable}

\twocolumn

\section{Conclusions}
The primary goal of this work was to develop the necessary tools to enable an accurate and sound comparison between different solvers for sparse linear systems, particularly when dealing with severely ill-conditioned problems. In doing so, we aimed to create a resource that serves as a guide for domain scientists and researchers facing the challenges of solving such systems. Additionally, this work provides not only benchmark data but also theoretical insights and a comprehensive list of solvers that can be integrated into production code.

\subsection{Summary of Contributions}
We began by discussing the two main families of solvers – iterative and direct — highlighting their strengths, weaknesses and differences. We introduced the concept of the condition number and its critical role in determining the accuracy of solutions to linear systems. Specifically, we derived both loose and tight relative error bounds for $\|\mathbf{\hat{x}} - \mathbf{x}\| / \|\mathbf{x}\|$ and $\|\mathbf{\hat{x}} - \mathbf{x}\| / \|\mathbf{\hat{x}}\|$ in terms of the matrix condition number $\kappa_2(\mathbf{A})$, and we proved the implications of these bounds on the accuracy of solutions. We emphasized the significance of the condition number in identifying numerical singularity, stating that a matrix with a condition number equal to or greater than $1/\epsilon_{\mathrm{mach}}$ should be considered numerically singular. To address the challenge of computing accurate error bounds, we proposed the Projected Adam method — a novel iterative projected gradient descent algorithm accelerated by momentum. This method offers a simple and efficient way to compute the $\mathcal{L}_2$ condition number of a matrix without resorting to eigenvalues or singular values, requiring only one LU factorization. Finally, we presented benchmark results that incorporated these tools to compare 16 solvers across 11 state-of-the-art numerical libraries. Our results indicated that the best-performing solvers for our benchmark problem were Intel oneAPI MKL PARDISO, UMFPACK, and MUMPS, making them particularly well-suited for severely ill-conditioned problems.

\subsection{Impact and Applications}
The findings from this work have significant implications for real-world applications where solving ill-conditioned sparse linear systems is common. The detailed comparison of solvers provides valuable guidance for selecting the most appropriate solver for specific applications, taking into account both performance and accuracy. This is particularly relevant in fields such as computational physics, engineering and fluid dynamics, where the reliability and efficiency of numerical libraries can greatly influence the outcomes of large-scale simulations. This is especially true in the domain of computational geophysics, where severely ill-posed problems are encountered frequently.

\subsection{Limitations and Future Work}
While this study offers a comprehensive analysis of current solvers, it is important to acknowledge certain limitations. The benchmark problems and matrix sizes used in this study represent specific scenarios, and the results may vary with different sparsity structures, problem types or larger systems. Additionally, the limited scalability of the solvers on multi-core and GPU architectures suggests that the sparsity structure of the benchmark problem was particularly challenging, that the problem size was too small to benefit from parallel resources, or more likely a combination of the two. Future work could explore more advanced preconditioning techniques to improve the robustness of iterative solvers. Moreover, extending the benchmarks to include a broader range of problem sizes and matrix structures would also provide a more comprehensive assessment of true solver capabilities.

\subsection{Final Takeaways}

In conclusion, this work highlights the importance of choosing the right solver based on the problem's condition number and the available computational resources. The tools and insights provided here serve as a valuable resource for domain scientists and researchers, enabling them to soundly assess solutions to ill-conditioned sparse linear systems as well as compare the accuracy and performance of different numerical libraries. As computational challenges continue to evolve, the ongoing development and refinement of numerical solvers will remain crucial in addressing the complexities of modern scientific and engineering problems.

\section{Acknowledgements}
The author would like to express deep gratitude to Professor Taras Gerya (ETH Zurich) for his invaluable insights and discussions during the development of this work.


\bibliographystyle{IEEEbib-No-Dashed-Authors}
\bibliography{main}

\end{document}